\newcommand{\defeq}{\stackrel{{\text{def}}}{=}} 
\newcommand{\pp}[2]{\frac{\partial #1}{\partial #2}} 
\newcommand{\ppp}[2]{\frac{\partial^2 #1}{\partial #2^2}} 
\newcommand{\pppp}[3]{\frac{\partial^2 #1}{\partial #2 \partial #3}} 
\newcommand{\jump}[1]{\left[#1\right]} 
\newcommand{\Landau}{\mathcal{O}} 
\newcommand{\red}[1]{\color{red} #1 \color{black}}
\newcommand{\cin}{{c_{\textrm{in}}}} 
\newcommand{\cout}{{c_{\textrm{out}}}} 
\newcommand{\lot}{\textrm{LOT}} 
\newtheorem{proposition}{Proposition}
\newtheorem{theorem}{Theorem}
\begin{document}

\title{Shock Reflection in Plane Symmetry}
\date\today
\author[A.~Lisibach]{Andr\'e Lisibach}
\address{Andr\'e Lisibach\\Bern University of Applied Sciences}
\email{andre.lisibach@bfh.ch}
\begin{abstract}
  We consider the problem of shock reflection on a solid wall in plane symmetry for a barotropic fluid. We establish a local in time solution after the point of reflection, thereby determining the state behind the reflected shock. The location of the reflected shock in space time being unknown the problem constitutes a free boundary problem. Furthermore, the quantity describing the thermodynamic state of the fluid along the wall being also unknown, complete data for the problem is only given in a single point, the reflection point. 
\end{abstract}

\maketitle
\tableofcontents

\section{Introduction}
The reflection of a shock on a solid wall is a classic problem in gas dynamics. We look at the situation in plane symmetry, i.e.~the quantities describing the fluid depend only on one space variable. A typical application is gas flow in a tube which is closed on one end. The setup consists of a shock wave traveling towards a wall where it gets reflected. The domain ahead of the incident shock shrinks to a point when the shock hits the wall. After reflection, the shock moves away from the wall and leaves a growing zone behind it, between the wall and the reflected shock.

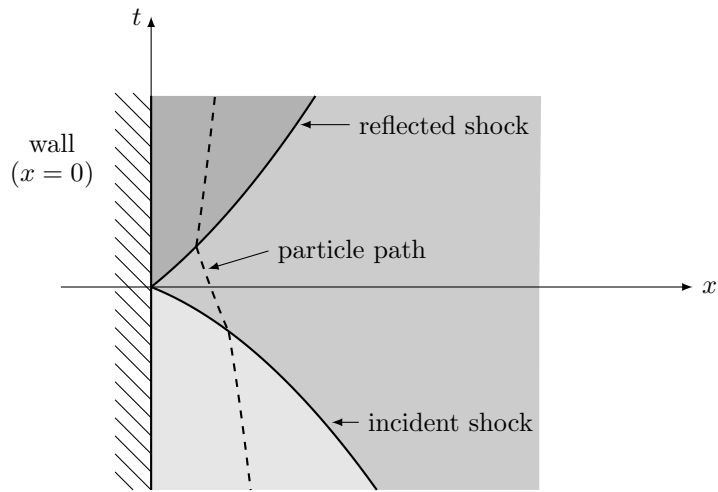
\begin{figure}[h]
  \centering
  \begin{tikzpicture}[scale=1.2]
  \fill [fill=gray!20, domain=0:2.5, variable=\x] (0,0) -- plot (\x,{-.4*\x-0.2*\x^2}) -- (0,-2.25) -- cycle;
  \fill [fill=gray!40, domain=0:2.5, variable=\x] (0,0) -- plot (\x,{-.4*\x-0.2*\x^2}) -- ++(1.8,0)-- (4.3,0) -- cycle;
  \fill [fill=gray!40, domain=0:1.82, variable=\x] (0,0) -- plot (\x,{.8*\x+0.2*\x^2}) -- ++(2.5,0)-- (4.3,0) -- cycle;
  \fill [fill=gray!60, domain=0:1.82, variable=\x] (0,0) -- plot (\x,{.8*\x+0.2*\x^2}) -- (0,2.11848) -- cycle;

  \draw[thick,domain=0:1.82,smooth,variable=\x] plot (\x,{.8*\x+0.2*\x^2});
  \draw[thick,domain=0:2.5,smooth,variable=\x] plot (\x,{-.4*\x-0.2*\x^2});
  
  \begin{scope}[yshift=-2.25cm,rotate=90]
    \foreach\n in{0,...,20}
    \draw ({0.2*\n},0)-- ++(.4,.4);
    \draw (0,0.2)--(0.2,.4);
    \draw (4.2,0)--(4.4,0.2);
  \end{scope}
  \draw[thick] (0,-2.25)--(0,2.12);
  
  \draw[-latex] (-1,0) -- (6,0) node[anchor=west] {$x$};
  \draw[-latex] (0,-2.2) -- (0,3) node[anchor=east] {$t$};

  \draw[latex-](1.65,1.8)--(2.2,1.8)node[anchor=west] {reflected shock};
  \draw[latex-](2,-1.5)--(2.3,-1.5)node[anchor=west] {incident shock};
  \node[anchor=north] at (-1.1,1.8) {wall};
  \node[anchor=north] at (-1.1,1.5) {($x=0$)};

  \begin{scope}[xshift=0.5cm,yshift=0.45cm]
    \draw[thick,dashed,domain=0:.208,smooth,variable=\x] plot (\x,{7*\x+5*\x^2});
    \draw[thick,dashed,domain=0:.355,smooth,variable=\x] plot (\x,{-3*\x+1*\x^2});    
  \end{scope}
  \begin{scope}[xshift=0.86cm,yshift=-0.5cm]
    \draw[thick,dashed,domain=0:.243,smooth,variable=\x] plot (\x,{-6*\x-5*\x^2});    
  \end{scope}
  \draw[latex-](.63,.2)--(1.3,.4)node[anchor=west] {particle path};
  
\end{tikzpicture}  
\caption{Shock reflection.}
\end{figure}
If the state ahead of the incident shock is a zone of quiet, i.e.~fluid at rest with constant density, and behind the incident shock the gas possesses constant velocity and again constant density, the situation can be mathematically represented by piecewise constant solutions of the corresponding differential equations, satisfying the shock conditions across both shocks. See for example the treatment in \cite{courant}. From a point of view of applications, this simple case is of great importance since it represents quite accurately the physical situation in a small neighborhood of a reflection point, giving a good estimate of the pressure increase which can be achieved by reflection. However, from a mathematical point of view the existence of a solution for more general conditions is desirable. 

We start our mathematical treatment of the problem at the time of reflection ($t=0$) and give data for $x\geq 0$ which corresponds to the state behind the incident shock, which at the same time is to be the state ahead of the reflected shock. Thereby we exclude any discussion of the formation, propagation and impingment of the incident shock.
In any case, one can think of the incident shock as being generated by a moving piston (see \cite{courant}) and due to the formation and development results (see \cite{riemann}, \cite{ChristodoulouLisibach}) and an appropriate placement of the wall the setting up to $t=0$ can be rigorously established.

We assume that the given data does not correspond to vanishing velocity at the point of reflection, i.e.~we assume that the data is incompatible with the boundary condition on the wall. Instead we assume that the data at the reflection point, through the jump conditions with vanishing velocity in the state behind, yields a shock speed which is subsonic relative to the state behind and is supersonic relative to the state ahead which is given by the data at the reflection point. These two conditions, while mathematically necessary for a shock to be determined in an evolutionary sense (see \cite{riemann}), correspond also to the only shocks observed naturally. In the following we call these conditions collectively the determinism condition.

Our assumptions consider the data in only one point, the reflection point. Therefore, our assumptions guarantee that the data at the reflection point yields a physical solution to the reflection problem in one point. The existence of such data is therefore justified exactly by the existence of the solution in the case where all the fluid states correspond to constant states as described above.

In addition to the given data we assume the existence of a future development corresponding to a future domain of dependence of $\{x\geq 0\}$ see figure \ref{future_development} on page \pageref{future_development}.

\medskip

The present problem has been treated in a different way in \cite{LiTaTsin1}, \cite{LiTaTsin2}. While in these works the authors treat a more general set of equations, the proof is in stages of increasing difficulty, dealing with a linear system and a fixed boundary first and then extending the results to the nonlinear system and a free boundary. During this process fewer and fewer details of the proof are given. Also, the actual state ahead of the reflected shock is taken into account after the local existence has been established. We believe that our approach and presentation gives additional insight into the problem and opens up the road for future progress in more involved situations.

\section{Equations of Motion}
\subsection{Euler Equations}
We study one dimensional fluid flow without friction. We denote by $\rho$, $w$ and $p$ the density, velocity and pressure respectively. The equations of motion are
\begin{align}
  \label{eq:1}
  \partial_t\rho+\partial_x(\rho w)&=0,\\
  \label{eq:2}
  \partial_t w+w\partial_xw&=-\frac{1}{\rho}\partial_xp.
\end{align}
We assume $p=p(\rho)$ is a given smooth function which satisfies $\frac{dp}{d\rho}(\rho)>0$. We do not take into account entropy. Equations \eqref{eq:1}, \eqref{eq:2} follow from the conservation of mass and momentum once we assume that the quantities describing the fluid are continuously differentiable (see \cite{courant}).

\subsection{Riemann Invariants and Characteristic Equations}
The Riemann invariants are
\begin{align}
  \label{eq:3}
  \alpha\defeq\int^\rho\frac{\eta(\rho')}{\rho'}d\rho'+w,\qquad\beta\defeq\int^\rho\frac{\eta(\rho')}{\rho'}d\rho'-w,
\end{align}
where $\eta\defeq \sqrt{\frac{dp}{d\rho}}$ is the sound speed, see \cite{riemann} and \cite{Earnshaw1998}. As a consequence we have
\begin{align}
  \label{eq:4}
  w=\frac{\alpha-\beta}{2}.
\end{align}
Defining
\begin{alignat}{3}
  \label{eq:5}
  \cout&\defeq w+\eta,&\qquad\qquad\cin&\defeq w-\eta,\\
  \label{eq:6}
  L_{\textrm{out}}&\defeq\partial_t+\cout\partial_x,& L_{\textrm{in}}&\defeq\partial_t+\cin\partial_x,
\end{alignat}
we have
\begin{align}
  \label{eq:7}
  L_{\textrm{out}}\alpha=0=L_{\textrm{in}}\beta,
\end{align}
i.e.~the Riemann invariants $\alpha$, $\beta$ are invariant along the integral curves of $L_\textrm{out}$, $L_{\textrm{in}}$, respectively.

We have
\begin{align}
  \label{eq:8}
  \frac{\partial(\alpha,\beta)}{\partial(\rho,w)}=
  \begin{pmatrix}
    \eta/\rho & 1\\\eta/\rho & -1
  \end{pmatrix}.
\end{align}
Therefore,
\begin{align}
  \label{eq:9}
  \frac{\partial(\rho,w)}{\partial(\alpha,\beta)}=
  \begin{pmatrix}
    \rho/2\eta & \rho/2\eta\\1/2 & -1/2
  \end{pmatrix}.
\end{align}

\subsection{Jump Conditions and the Determinism Condition}
If we assume that there exists a differentiable curve $t\mapsto(t,\xi(t))$ (a so called shock curve) across which the quantities describing the fluid suffer disconinuities but in the closure of both sides the differential equations are satisfied, the conservation of mass and momentum yield the following two conditions (jump conditions) on the discontinuities (see \cite{courant})
\begin{align}
  \label{eq:10}
  \jump{\rho}V&=\jump{\rho w},\\
    \label{eq:11}
  \jump{\rho w}V&=\jump{\rho w^2+p},
\end{align}
where we denote by $V$ the shock speed: $V=d\xi/dt$ and by $\jump{f}$ the difference of the function $f$ across $\xi$, i.e.
\begin{align}
  \label{eq:12}
  \jump{f}=f_+-f_-
\end{align}
where $f_+$ is the quantity evaluated behind the shock and $f_-$ is the quantity evaluated ahead of the shock\footnote{From an evolutionary point of view, looking at the time evolution of a portion of fluid, the state ahead corresponds to the part of the fluid flow line before the intersection whith the shock and the state behind corresponds to the part of the fluid flow line after the intersection with the shock. Hence $\jump{f}$ corresponds to the jump in the quantity $f$ while crossing the shock curve.}:
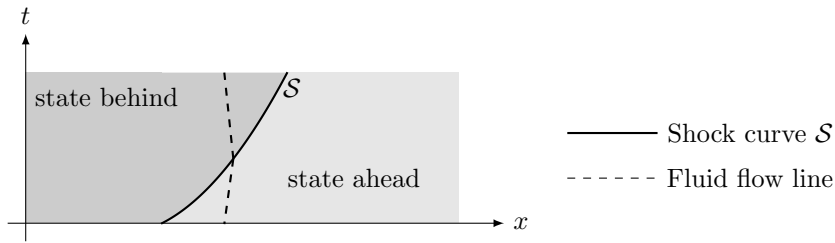
\begin{figure}[h]
  \centering
  \begin{tikzpicture}[scale=1.2]

  \begin{scope}[yshift=7.87cm]
    \fill [fill=gray!20, domain=.5:1.9, variable=\x] (.5, -7.87) -- plot ({\x}, {.5*\x^2-8}) -- (1.9, -7.87) -- cycle;
    \fill [fill=gray!40, domain=.5:1.9, variable=\x] (.5, -6.2) -- plot ({\x}, {.5*\x^2-8}) -- (1.9, -6.2) -- cycle;
    \fill[fill=gray!20] (1.89,-7.87) rectangle (3.8,-6.2);
    \fill[fill=gray!40] (-1,-7.87) rectangle (.51,-6.2);
    \draw[thick,domain=.5:1.9,smooth,variable=\x] plot (\x,{.5*\x^2-8});
    \draw[dashed,thick](1.3,-7.155)--(1.2,-6.2);
    \draw[dashed,thick] (1.3,-7.155)--(1.2,-7.88);
  \end{scope}
  \node[anchor=west] at (1.75,1.5) {$\mathcal{S}$};
  \node[anchor=west] at (1.8,.5) {state ahead};
  \node[anchor=west] at (-1,1.4) {state behind};
  \begin{scope}[xshift=-1cm]
    \draw[-latex] (-0.2,0) -- (5.3,0) node[anchor=west] {$x$};
    \draw[-latex] (0,-0.2) -- (0,2.1) node[anchor=south] {$t$};
  \end{scope}
  \begin{scope}[xshift=3cm,yshift=2cm]
  \draw[dashed](2,-1.5)--(3,-1.5)node[anchor=west]{Fluid flow line};
  \draw[thick](2,-1)--(3,-1)node[anchor=west]{Shock curve $\mathcal{S}$};
  \end{scope}
\end{tikzpicture}
  \caption{The state ahead and behind of the shock.}
\end{figure}

The determinism condition states that the speed of the shock is supersonic relativ to the state ahead of the shock and subsonic relative to the state behind the shock. For a more general description of the determinism condition see the epilogue of \cite{christodoulouformation} or section 2.3 of \cite{ChristodoulouLisibach}.

\section{Setting the Scene}
\subsection{The State Ahead\label{state_ahead}}
We consider data for $\rho$, $w$ given at $t=0$ for $x\geq 0$. This data possesses a future development, i.e.~there exist functions $\rho^\ast(t,x)$, $w^\ast(t,x)$ which solve the equations of motion and coincide with the data for $t=0$. This development of the initial data is bounded in the past by $t=0$, in the future by $t=T$ and to the left by an outgoing characteristic originating at the reflection point $(t,x)=(0,0)$. We are going to denote this characteristic by $\mathcal{B}$. We assume that this data corresponds to a shock reflection point in the following way. We assume that at the reflection point, the data is incompatible with the wall boundary condition, i.e.~we assume $w^\ast(0,0)\neq 0$. We assume that the jump conditions \eqref{eq:10}, \eqref{eq:11} with
\begin{align}
  \label{eq:13}
  \rho_-=\rho^\ast(0,0),\qquad w_-=w^\ast(0,0),\qquad w_+=0
\end{align}
possess the solution
\begin{align}
  \label{eq:14}
  \rho_+\defeq \rho_0,\qquad V\defeq V_0,
\end{align}
such that the determinism condition at $(t,x)=(0,0)$ is satisfied. I.e.
\begin{align}
  \label{eq:15}
  (\cout_0^\ast)_0<V_0<\eta_0,
\end{align}
where
\begin{align}
  \label{eq:16}
  (\cout_0^\ast)_0=w^\ast(0,0)+\eta(\rho^\ast(0,0)),\qquad \eta_0=\eta(\rho_0).
\end{align}
In addition we assume
\begin{align}
  \label{eq:17}
  0<V_0.
\end{align}
The above described development of the data and the solution of the jump conditions at the reflection point, satisfying the determinism condition and \eqref{eq:17}, set up the shock reflection problem.
\begin{figure}[h]
  \centering
  \begin{tikzpicture}[scale=1.3]
  \fill [fill=gray!40, domain=0:.5, variable=\x] (0,0) -- plot (\x,{2*\x+2*\x^2}) -- ++(5,0)-- (5.5,0) -- cycle;

  \draw[thick,domain=0:.5,smooth,variable=\x] plot (\x,{2*\x+2*\x^2});
  
  \draw[-latex] (-1,0) -- (6,0) node[anchor=west] {$x$};
  \draw[-latex] (0,-.2) -- (0,2) node[anchor=east] {$t$};

  \draw[latex-](.5,1.3)--(1,1.3)node[anchor=west] {outgoing charactertistic};
  \node[anchor=north] at (.5,1.9) {$\mathcal{B}$};
  \node[anchor=north] at (4,.8) {future development};
  \draw[latex-](4.5,0)--(4.5,-.5)node[anchor=north] {data};
  \draw[latex-](.5,.45)--(.5,-.5)node[anchor=north] {shock tangent};
  \draw[dashed,thick](0,0)--(.8,.8);
  \fill (0,0) circle[radius=1.2pt];
\end{tikzpicture}
  \caption{Future development of initial data.}
  \label{future_development}
\end{figure}
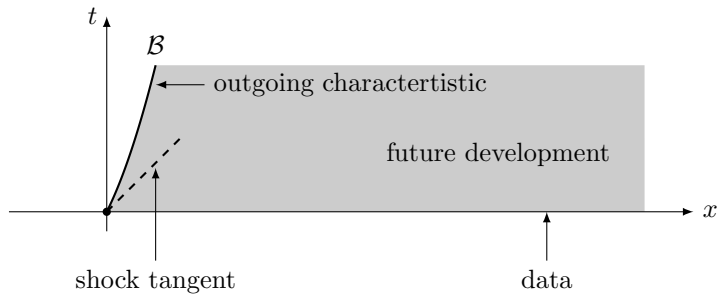

\subsection{The Shock Reflection Problem}
The shock reflection problem is the following:\\
Find a world line $\mathcal{S}$, lying inside the future development, originating at the reflection point $(t,x)=(0,0)$, together with a solution of the equations of motion in a domain of spacetime bounded by $\mathcal{S}$ and $\{x=0\}$, such that along $\{x=0\}$ the wall boundary conditions are satisfied and across $\mathcal{S}$ the new solution displays jumps relative to the solution in the future development of the data, jumps which satisfy the jump conditions. The domain to the right of $\mathcal{S}$, where the solution in the future development of the data holds, is called the state ahead and the domain between the wall $\{x=0\}$ and $\mathcal{S}$, where the new solution holds is called the state behind. $\mathcal{S}$ is to be supersonic relative to the state ahead and subsonic relative to the state behind. The requirement in the last sentence is the determinism condition. We are going to bound the state behind also by an outgoing characteristic.
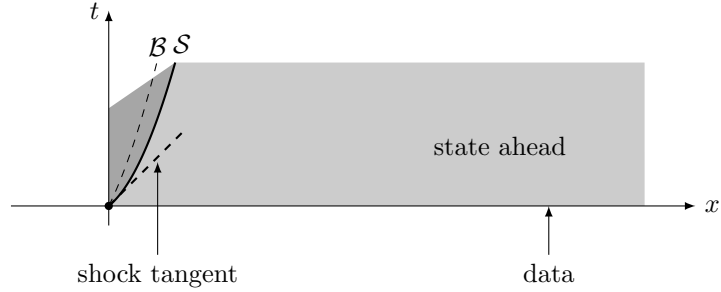
\begin{figure}[h]
  \centering
  \begin{tikzpicture}[scale=1.3]
  \fill [fill=gray!40, domain=0:.68, variable=\x] (0,0) -- plot (\x,{.8*\x+2*\x^2}) -- ++(4.8,0)-- (5.48,0) -- cycle;

  \fill [fill=gray!70, domain=0:.68, variable=\x] (0,0) -- plot (\x,{0.8*\x+2*\x^2}) -- (0,1)-- cycle;
      
  \draw[dashed,domain=0:.5,smooth,variable=\x] plot (\x,{2*\x+2*\x^2});
  \node[anchor=north] at (.5,1.82) {$\mathcal{B}$};
  
  \draw[thick,domain=0:.68,smooth,variable=\x] plot (\x,{.8*\x+2*\x^2});
  \node[anchor=north] at (.75,1.85) {$\mathcal{S}$};
  
  \draw[-latex] (-1,0) -- (6,0) node[anchor=west] {$x$};
  \draw[-latex] (0,-.2) -- (0,2) node[anchor=east] {$t$};


  \node[anchor=north] at (4,.8) {state ahead};
  \draw[latex-](4.5,0)--(4.5,-.5)node[anchor=north] {data};
  \draw[latex-](.5,.45)--(.5,-.5)node[anchor=north] {shock tangent};
  \draw[dashed,thick](0,0)--(.8,.8);
  \fill (0,0) circle[radius=1.2pt];
\end{tikzpicture}
  \caption{Shock reflection Problem. The state behind the reflected shock in dark shade.}
  \label{reflection_problem}
\end{figure}

\section{Characteristic Coordinates}
\subsection{Choice of Coordinates}
Introducing coordinates $u$, $v$\label{u-v-coordinates}, such that $u$ is constant along integral curves of $L_{\textrm{out}}$ and $v$ is constant along integral curves of $L_{\textrm{in}}$ (see \eqref{eq:6}), \eqref{eq:7} becomes
\begin{align}
  \label{eq:18}
  \frac{\partial\alpha}{\partial v}=0=\frac{\partial\beta}{\partial u}.
\end{align}
In $u$-$v$-coordinates, $t$ and $x$ satisfy the characteristic equations
\begin{align}
  \label{eq:19}
  \frac{\partial x}{\partial v}=\cout\frac{\partial t}{\partial v},\qquad\frac{\partial x}{\partial u}=\cin\frac{\partial t}{\partial u}.
\end{align}

We choose the $u$-$v$-coordinates such that the following conditions hold:
\begin{enumerate}
\item The origin $(u,v)=(0,0)$ corresponds to the reflection point $(t,x)=(0,0)$.
\item The wall $\{x=0\}$ corresponds to $u=v$.
\item The shock curve corresponds to $u=av$, for a constant $a$ with $0<a<1$ (see \eqref{eq:33}, \eqref{eq:35} below).
\item We have
  \begin{align}
    \label{eq:20}
    \pp{x}{v}(0,0)=1.
  \end{align}
\end{enumerate}

This choice of coordinates is justified as follows: The condition $u=v$ at the wall $\{x=0\}$ can be imposed once a $v=\textrm{const.}$~curve intersects the wall exactly once, which is the case because the wall is a subsonic curve. Thus $u$ is determined once $v$ is determined. The shock curve being subsonic relative to the state behind, it is given in $u$-$v$-coordinates by an equation of the form $u=f(v)$, where $f$ is an increasing function. We must have $f(0)=0$ since the reflection point, which is the origin, is on the wall as well as on the shock curve. Moreover, $f(v)<v$ for $v>0$ because the shock curve is to the right of the wall. Now, we can set $u=\phi(\tilde{u})$, $v=\phi(\tilde{v})$, where $\phi$ is any increasing function such that $\phi(0)=0$, without changing the equation of the wall $\tilde{u}=\tilde{v}$ or the fact that the reflection point corresponds to the origin. The equation of the shock curve is then transformed to $\tilde{u}=\tilde{f}(\tilde{v})$, where
\begin{align}
  \label{eq:21}
  \tilde{f}=\phi^{-1}\circ f\circ \phi.
\end{align}
It follows that $\tilde{f}'(0)=f'(0)$. Let $a=f'(0)$. We have $0<a<1$. The problem is then to choose an appropriate $\phi$ such that $\tilde{f}(\tilde{v})=a\tilde{v}$. It can easily be shown by an iteration method starting with the $0$'th iterate $\phi_0$ being the identity map $\phi_0(x)=x$ that the equation $\phi\circ \tilde{f}=f\circ \phi$ with $\tilde{f}(x)=ax$ has a solution $\phi$ defined on $[0,\varepsilon^\ast]$ for suitably small $\varepsilon^\ast>0$. We can then extend this local solution to a global one using a continuity argument.

Let $\varepsilon>0$. In the following we consider the domain which is bounded by the wall, the shock curve and the outgoing characteristic $u=a\varepsilon$:
\begin{align}
  \label{eq:22}
  T_\varepsilon=\Big\{(u,v)\in\mathbb{R}^2:0\leq u\leq v\leq \frac{u}{a}\leq\varepsilon\Big\}.
\end{align}
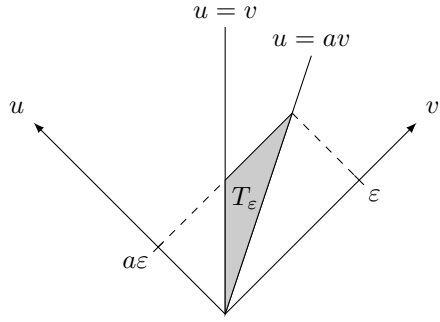
\begin{figure}[h]
  \centering
  \begin{tikzpicture}[scale=1.8]

  \begin{scope}[rotate=45]
    \draw[-latex](0,0)--(2,0)node[anchor=south west]{$v$};
    \draw[-latex](0,0)--(0,2)node[anchor=south east]{$u$};
    \draw(0,0)--(1.5,1.5)node[anchor=south]{$u=v$};
    \draw(0,0)--(1.8,.9)node[anchor=south]{$u=av$};
    \draw[fill=gray!40](0,0)--(1.4,.7)--(0.7,0.7)--cycle;
    \node at(0,.7)[anchor=north east]{$a\varepsilon$};
    \draw[dashed](0,.7)--(.7,.7);
    \draw (-.05,.7)--(.05,.7);
    \draw[dashed](1.4,.7)--(1.4,0);
    \draw (1.4,.05)--(1.4,-.05);
    \node at(1.4,0)[anchor=north west]{$\varepsilon$};
    \node at(.71,.5){$T_\varepsilon$};
  \end{scope}

\end{tikzpicture}
  \caption{The domain $T_\varepsilon$.}
\end{figure}

\subsection{Boundary Conditions along the Wall}
Along the wall $u=v$ we have
\begin{align}
  \label{eq:23}
  w(u,u)=0.
\end{align}
Using \eqref{eq:4}, this implies
\begin{align}
  \label{eq:24}
  \alpha(u,u)=\beta(u,u).
\end{align}

The function $x(u,v)$ satisfies the boundary condition
\begin{align}
  \label{eq:25}
  x(u,u)&=0.
\end{align}

\subsection{Boundary Conditions along the Shock Curve}
The functions $t(u,v)$, $x(u,v)$ along the shock curve $u=av$ are
\begin{align}
  \label{eq:26}
  t_+(v)\defeq t(av,v),\qquad x_+(v)\defeq x(av,v).
\end{align}
The shock speed $V$ satisfies
\begin{align}
  \label{eq:27}
  Vdt_+=dx_+.
\end{align}
Therefore,
\begin{align}
  \label{eq:28}
  V(v)\left(a\pp{t}{u}(av,v)+\pp{t}{v}(av,v)\right)=a\pp{x}{u}(av,v)+\pp{x}{v}(av,v).
\end{align}
Using \eqref{eq:19} we rewrite this as
\begin{align}
  \label{eq:29}
  \pp{x}{v}(av,v)=a\pp{x}{u}(av,v)\frac{\cout_+(v)}{\cin_+(v)}\,\frac{V(v)-\cin_+(v)}{\cout_+(v)-V(v)}.
\end{align}
At the reflection point $(u,v)=(0,0)$ we have
\begin{align}
  \label{eq:30}
  \pp{x}{v}(0,0)=1=-\pp{x}{u}(0,0)
\end{align}
(see \eqref{eq:20}, \eqref{eq:25}) and
\begin{align}
  \label{eq:31}
  \cin(0,0)&=w(0,0)-\eta(0,0)=-\eta_0,\\
    \label{eq:32}
  \cout(0,0)&=w(0,0)+\eta(0,0)=\eta_0,
\end{align}
where $\eta_0=\eta(\rho_0)$ is given by the solution of the jump conditions at the reflection point (see \eqref{eq:14}). Hence,
\begin{align}
  \label{eq:33}
    a=\frac{\eta_0-V_0}{\eta_0+V_0},
\end{align}
which fixes the value of the constant $a$ introduced in the assumptions on the coordinate system. By the assumptions \eqref{eq:15}, \eqref{eq:17} we have
\begin{align}
  \label{eq:34}
0<V_0<\eta_0,
\end{align}
which implies
\begin{align}
  \label{eq:35}
  0<a<1,
\end{align}
compatible with our choice of coordinates. Defining
\begin{align}
  \label{eq:36}
    \Gamma(v)\defeq a\frac{\cout_+(v)}{\cin_+(v)}\,\frac{V(v)-\cin_+(v)}{\cout_+(v)-V(v)},
\end{align}
the boundary condition \eqref{eq:29} becomes
\begin{align}
  \label{eq:37}
  \pp{x}{v}(av,v)=\pp{x}{u}(av,v)\Gamma(v).
\end{align}
We note that
\begin{align}
  \label{eq:38}
  \Gamma(0)=-1.
\end{align}

The functions $\alpha(u,v)$, $\beta(u,v)$ are given along the shock by
\begin{align}
  \label{eq:39}
  \alpha_+(v)\defeq\alpha(av,v),\qquad \beta_+(v)\defeq\beta(av,v).
\end{align}
Together with \eqref{eq:18} and \eqref{eq:24}, we obtain
\begin{align}
  \label{eq:40}
  \alpha_+(v)=\alpha(av,v)=\alpha(av,av)=\beta(av,av)=\beta(a^2v,av)=\beta_+(av).
\end{align}
Therefore,
\begin{align}
  \label{eq:41}
\alpha_+'(v)&=a\beta_+'(av).
\end{align}

\subsection{Jump Conditions}
The jump conditions \eqref{eq:10}, \eqref{eq:11} are equivalent to
\begin{align}
  \label{eq:42}
  V&=\frac{\jump{\rho w}}{\jump{\rho}},\\
    \label{eq:43}
  0&=\jump{\rho w}^2-\jump{\rho w^2+p}\jump{\rho}\defeq I(\rho_+,\rho_-,w_+,w_-).
\end{align}
$\rho$ and $w$ are given smooth functions of $\alpha$, $\beta$. Defining
\begin{align}
  \label{eq:44}
  J(\alpha_+,\beta_+,\alpha_-,\beta_-)\defeq I\Big(\rho(\alpha_+,\beta_+),\rho(\alpha_-,\beta_-),w(\alpha_+,\beta_+),w(\alpha_-,\beta_-)\Big),
\end{align}
the above jump condition \eqref{eq:43} in terms of the Riemann invariants is
\begin{align}
  \label{eq:45}
  J(\alpha_+,\beta_+,\alpha_-,\beta_-)=0.
\end{align}
We recall that the solution of the jump conditions \eqref{eq:42}, \eqref{eq:43} at the reflection point is $\rho_0$, $V_0$ (see \eqref{eq:14}). The value $\rho_0$ determines the value of $\beta_0=\alpha_0$. These values, substituted for $\alpha_+$, $\beta_+$, together with the values of $\alpha$, $\beta$ at the reflection point in the state ahead (hence given by the data) satisfy the jump condition \eqref{eq:45}, i.e.
\begin{align}
  \label{eq:46}
  J(\alpha_0,\beta_0,\alpha_{-0},\beta_{-0})=0.
\end{align}
Here
\begin{align}
  \label{eq:47}
  \alpha_{-0}=\alpha^\ast(0,0),\qquad \beta_{-0}=\beta^\ast(0,0)
\end{align}
are given by the solution in the state ahead at the reflection point. We have
\begin{align}
  \label{eq:48}
  \pp{J}{\beta_+}(\alpha_+,\beta_+,\alpha_-,\beta_-)&=\pp{I}{\rho_+}\Big(\rho(\alpha_+,\beta_+),\rho(\alpha_-,\beta_-),w(\alpha_+,\beta_+),w(\alpha_-,\beta_-)\Big)\pp{\rho}{\beta}(\alpha_+,\beta_+)\nonumber\\
  &\quad +\pp{I}{w_+}\Big(\rho(\alpha_+,\beta_+),\rho(\alpha_-,\beta_-),w(\alpha_+,\beta_+),w(\alpha_-,\beta_-)\Big)\pp{w}{\beta}(\alpha_+,\beta_+).
\end{align}
We have
\begin{align}
\label{eq:49}
  \pp{I}{\rho_+}(\rho_+,\rho_-,w_+,w_-)&=2\jump{\rho w}w_+-(w_+^2+\eta_+^2)\jump{\rho}-\jump{\rho w^2+p},\\
\label{eq:50}
  \pp{I}{w_+}(\rho_+,\rho_-,w_+,w_-)&=2\jump{\rho w}\rho_+-2\rho_+w_+\jump{\rho}.
\end{align}
Using these together with \eqref{eq:9} and the jump conditions \eqref{eq:42}, \eqref{eq:43} in \eqref{eq:48} we arrive at
\begin{align}
  \label{eq:51}
  \pp{J}{\beta_+}(\alpha_+,\beta_+,\alpha_-,\beta_-)&=-\frac{\jump{\rho}\rho_+}{2\eta_+}(V-\cin_+)^2.
\end{align}
In this equation all the quantities on the right hand side are functions of $\alpha_+$, $\beta_+$, $\alpha_-$, $\beta_-$, either through given functions of $\rho$ and $w$ or, as in the case for $V$, through the jump condition \eqref{eq:42}. We have
\begin{align}
  \label{eq:52}
  V_0-\cin_{+0}>-\cin_{+0}=-(w_0-\eta_0)=\eta_0>0,
\end{align}
where for the first inequality we used \eqref{eq:17} (recall also that $w_0=0$). Hence
\begin{align}
  \label{eq:53}
  \pp{J}{\beta_+}(\alpha_0,\beta_0,\alpha_{-0},\beta_{-0})\neq 0.
\end{align}
Using the implicit function theorem, we conclude from  \eqref{eq:46}, \eqref{eq:53} that there exists a smooth function $H(\alpha_+,\alpha_-,\beta_-)$, such that
\begin{align}
  \label{eq:54}
  \beta_0=H(\alpha_0,\alpha_{-0},\beta_{-0})
\end{align}
and
\begin{align}
  \label{eq:55}
  J\Big(\alpha_+,H(\alpha_+,\alpha_{-},\beta_{-}),\alpha_-,\beta_-\Big)=0
\end{align}
for $(\alpha_+,\alpha_{-},\beta_{-})$ sufficiently close to $(\alpha_0,\alpha_{-0},\beta_{-0})$. Along the shock $u=av$ we have
\begin{align}
  \label{eq:56}
  \beta_+(v)=H\Big(\alpha_+(v),\alpha_-(v),\beta_-(v)\Big).
\end{align}
Taking the derivative we obtain
\begin{align}
  \label{eq:57}
  \beta_+'(v)&=F\Big(\alpha_+(v),\alpha_-(v),\beta_-(v)\Big)\alpha_+'(v)\nonumber\\
             &\quad+M_1\Big(\alpha_+(v),\alpha_-(v),\beta_-(v)\Big)\beta_-'(v)\nonumber\\
             &\quad+M_2\Big(\alpha_+(v),\alpha_-(v),\beta_-(v)\Big)\alpha_-'(v),
\end{align}
where
\begin{align}
  \label{eq:58}
  F(\alpha_+,\alpha_-,\beta_-)&\defeq\pp{H}{\alpha_+}(\alpha_+,\alpha_-,\beta_-)\nonumber\\
                                       &=-\frac{\displaystyle{\pp{J}{\alpha_+}(\alpha_+,H(\alpha_+,\alpha_-,\beta_-),\alpha_-,\beta_-)}}{\displaystyle{\pp{J}{\beta_+}(\alpha_+,H(\alpha_+,\alpha_-,\beta_-),\alpha_-,\beta_-)}}
\end{align}
and similar expressions hold for $M_1$, $M_2$. The denominator in \eqref{eq:58} is given by \eqref{eq:51}. For the numerator we find analogously
\begin{align}
  \label{eq:59}
  \pp{J}{\alpha_+}(\alpha_+,\beta_+,\alpha_-,\beta_-)=-\frac{\jump{\rho}\rho_+}{2\eta_+}(\cout_+-V)^2.
\end{align}
Hence, we obtain
\begin{align}
  \label{eq:60}
  F(\alpha_+,\alpha_{-},\beta_{-})=-\left(\frac{\cout_+-V}{V-\cin_+}\right)^2.
\end{align}
We note that by \eqref{eq:31}, \eqref{eq:32}, \eqref{eq:33} we have
\begin{align}
  \label{eq:61}
  F_0\defeq F(\alpha_0,\alpha_{-0},\beta_{-0})=-a^2.
\end{align}
Taking another derivative of \eqref{eq:57} we obtain
\begin{align}
  \label{eq:62}
  \beta_+''(v)&=F\Big(\alpha_+(v),\alpha_-(v),\beta_-(v)\Big)\alpha_+''(v)\nonumber\\
  &\quad+M_1\Big(\alpha_+(v),\alpha_-(v),\beta_-(v)\Big)\beta_-''(v)\nonumber\\
  &\quad+M_2\Big(\alpha_+(v),\alpha_-(v),\beta_-(v)\Big)\alpha_-''(v)\nonumber\\
  &\quad+G\Big(\alpha_+(v),\alpha_-(v),\beta_-(v),\alpha_+'(v),\alpha_-'(v),\beta_-'(v)\Big),
\end{align}
where
\begin{align}
  \label{eq:63}
  G\Big(\alpha_+,\alpha_-,\beta_-,\alpha_+',\alpha_-',\beta_-'\Big)&\defeq\left(\pp{F}{\alpha_+}\alpha_+'+\pp{F}{\alpha_-}\alpha_-'+\pp{F}{\beta_-}\beta_-'\right)\alpha_+'\nonumber\\
                                                                                     &\quad+\left(\pp{M_1}{\alpha_+}\alpha_+'+\pp{M_1}{\alpha_-}\alpha_-'+\pp{M_1}{\beta_-}\beta_-'\right)\beta_-'\nonumber\\
                                                                                     &\quad +\left(\pp{M_2}{\alpha_+}\alpha_+'+\pp{M_2}{\alpha_-}\alpha_-'+\pp{M_2}{\beta_-}\beta_-'\right)\alpha_-'.
\end{align}
We note that the functions $F$, $M_1$, $M_2$ and $G$ are smooth functions of their arguments.

Setting $v=0$ in \eqref{eq:57} we obtain (recall that $\alpha_+(0)=\beta_0$)
\begin{align}
  \label{eq:64}
  \beta_0'\defeq\beta_+'(0)&=F\Big(\beta_0,\alpha_{-0},\beta_{-0}\Big)\alpha_+'(0)\nonumber\\
             &\quad+M_1\Big(\beta_0,\alpha_{-0},\beta_{-0}\Big)\beta_-'(0)\nonumber\\
             &\quad+M_2\Big(\beta_0,\alpha_{-0},\beta_{-0}\Big)\alpha_-'(0).
\end{align}
We have
\begin{align}
  \label{eq:65}
  \alpha_-(v)=\alpha^\ast(t_+(v),x_+(v)),\qquad \beta_-(v)=\beta^\ast(t_+(v),x_+(v)).
\end{align}
Taking the derivative yields
\begin{align}
  \label{eq:66}
  \alpha_-'(0)&=\left(\pp{\alpha^\ast}{t}\right)_0t_+'(0)+\left(\pp{\alpha^\ast}{x}\right)_0x_+'(0),\\
  \label{eq:67}
  \beta_-'(0)&=\left(\pp{\beta^\ast}{t}\right)_0t_+'(0)+\left(\pp{\beta^\ast}{x}\right)_0x_+'(0).
  \end{align}
The partial derivatives of $\alpha^\ast(t,x)$, $\beta^\ast(t,x)$ are given by the solution in the state ahead. With
\begin{align}
  \label{eq:68}
  t_+'(v)=a\pp{t}{u}(av,v)+\pp{t}{v}(av,v),\qquad x_+'(v)=a\pp{x}{u}(av,v)+\pp{x}{v}(av,v)
\end{align}
and using the characteristic equations \eqref{eq:19} together with
\begin{align}
  \label{eq:69}
  \cout(0,0)=\eta_0=-\cin(0,0)
\end{align}
and \eqref{eq:20} we obtain
\begin{align}
  \label{eq:70}
  \alpha_-'(0)&=\left(\pp{\alpha^\ast}{t}\right)_0\frac{1+a}{\eta_0}+\left(\pp{\alpha^\ast}{x}\right)_0(1-a),\\
  \label{eq:71}
  \beta_-'(0)&=\left(\pp{\beta^\ast}{t}\right)_0\frac{1+a}{\eta_0}+\left(\pp{\beta^\ast}{x}\right)_0(1-a). 
\end{align}
Using  \eqref{eq:61} and $\alpha_+'(0)=a\beta_+'(0)=a\beta_0'$ (see \eqref{eq:41}) for the first term in \eqref{eq:64}  we obtain
\begin{align}
  \label{eq:72}
  \beta_0'=\frac{1}{1+a^3}\left(M_1\Big(\beta_0,\alpha_{-0},\beta_{-0}\Big)\beta_-'(0)+M_2\Big(\beta_0,\alpha_{-0},\beta_{-0}\Big)\alpha_-'(0)\right)
\end{align}
with $\alpha_-'(0)$, $\beta_-'(0)$ given by \eqref{eq:70}, \eqref{eq:71} respectively.

In the following we use \eqref{eq:54} and \eqref{eq:72} as definitions of the constants $\beta_0$, $\beta_0'$. We note that these constants are given in terms of the solution in the state ahead at the reflection point and the solution $\rho_0$ as given by the jump conditions at the reflection point, together with the condition $w_+=0$ (see \eqref{eq:13}, \eqref{eq:14}).

\subsection{Equations for $\pp{x}{u}$, $\pp{x}{v}$}
In the following we consider a point in $T_\varepsilon$ with coordinates $(u,v)$. We have
\begin{align}
  \label{eq:73}
  \pp{x}{u}(u,v)=\pp{x}{u}(u,u)+\int_u^v\pppp{x}{u}{v}(u,v')dv'.
\end{align}
This corresponds to an integration along an outgoing characteristic, starting on the wall and ending at $(u,v)$. We rewrite the first term on the right as
\begin{align}
  \label{eq:74}
  \pp{x}{u}(u,u)&=-\pp{x}{v}(u,u)\nonumber\\
                &=-\pp{x}{v}(au,u)-\int_{au}^u\pppp{x}{u}{v}(u',u)du'\nonumber\\
                &=-\Gamma(u)\pp{x}{u}(au,u)-\int_{au}^u\pppp{x}{u}{v}(u',u)du'.
\end{align}
where for the first equality we used the boundary condition on the wall, see \eqref{eq:25}, for the second equality we integrate along an incoming characteristic starting at the shock and ending on the wall and for the last equality we use the boundary condition at the shock, see \eqref{eq:37}. Substituting \eqref{eq:74} into \eqref{eq:73} we obtain
\begin{align}
  \label{eq:75}
  \pp{x}{u}(u,v)=-\Gamma(u)\pp{x}{u}(au,u)-\int_{au}^u\pppp{x}{u}{v}(u',u)du'+\int_u^v\pppp{x}{u}{v}(u,v')dv'.
\end{align}
The paths of integration, as given by the two integrals in this equation, are shown in figure \ref{integration_paths} on the left.

We have
\begin{align}
  \label{eq:76}
  \pp{x}{v}(u,v)=\pp{x}{v}(av,v)+\int_{av}^u\pppp{x}{u}{v}(u',v)du'.
\end{align}
This corresponds to integrating along an incoming characteristic from the shock to the point $(u,v)$. For the first term we use \eqref{eq:37}. We obtain
\begin{align}
  \label{eq:77}
  \pp{x}{v}(u,v)=\Gamma(v)\pp{x}{u}(av,v)+\int_{av}^u\pppp{x}{u}{v}(u',v)du'.
\end{align}
The path of integration, as given by the integral in this equation, is shown in figure \ref{integration_paths} on the right.

\tikzset{->-/.style={decoration={
  markings,
  mark=at position .5 with {\arrow{>}}},postaction={decorate}}}

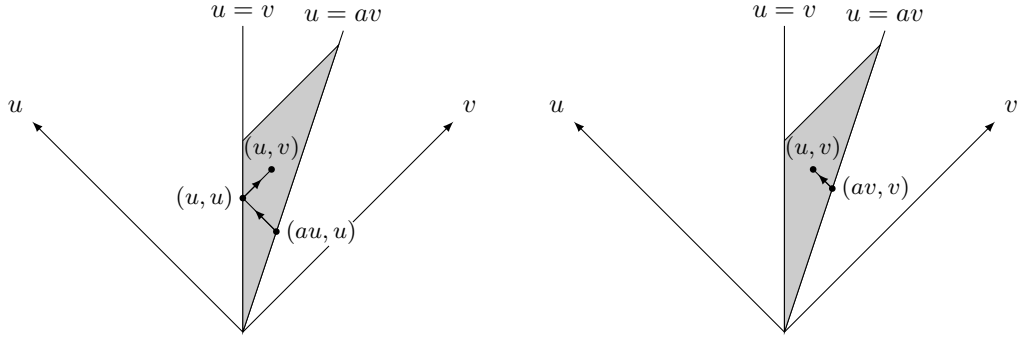
\begin{figure}[h!]
  \centering
  \begin{tikzpicture}[scale=1.8]

  \begin{scope}[rotate=45]
    \draw[-latex](1.15,0)--(2.2,0)node[anchor=south west]{$v$};
    \draw(0,0)--(.9,0);
    \draw[-latex](0,0)--(0,2.2)node[anchor=south east]{$u$};
    \draw(0,0)--(1.6,1.6)node[anchor=south]{$u=v$};
    \draw(0,0)--(2.1,1.05)node[anchor=south]{$u=av$};
    \draw[fill=gray!40](0,0)--(2,1)--(1,1)--cycle;
    \draw[-latex](0.7,0.7)--(.9,0.7);
    \draw(0.7,0.7)--(1,0.7);
    \draw[-latex](0.7,0.35)--(0.7,0.58);
    \draw(0.7,0.35)--(0.7,0.7);
    \node at (1,0.7)[anchor=south]{\small $(u,v)$};
    \node at (0.7,0.7)[anchor=east]{\small$(u,u)$};
    \node at (0.7,.35)[anchor=west]{\small$(au,u)$};
    \fill (1,0.7) circle[radius=.7pt];
    \fill (.7,0.7) circle[radius=.7pt];
    \fill (0.7,.35)circle[radius=.7pt];
  \end{scope}

  \begin{scope}[xshift=4cm,rotate=45]
    \draw[-latex](0,0)--(2.2,0)node[anchor=south west]{$v$};
    \draw[-latex](0,0)--(0,2.2)node[anchor=south east]{$u$};
    \draw(0,0)--(1.6,1.6)node[anchor=south]{$u=v$};
    \draw(0,0)--(2.1,1.05)node[anchor=south]{$u=av$};
    \draw[fill=gray!40](0,0)--(2,1)--(1,1)--cycle;
    \draw[-latex](1,.5)--(1,.65);
    \draw(1,.5)--(1,0.7);
    \node at (1,0.7)[anchor=south]{\small $(u,v)$};
    \node at (1,.5)[anchor=west]{\small$(av,v)$};
    \fill (1,0.7) circle[radius=.7pt];
    \fill (1,0.5) circle[radius=.7pt];
  \end{scope}

\end{tikzpicture}
  \caption{Integration paths for $\pp{x}{u}(u,v)$ on the left, as in equation \eqref{eq:75}. Integration path for $\pp{x}{v}(u,v)$ on the right, as in equation \eqref{eq:77}.
  \label{integration_paths}}
\end{figure}

Taking the derivative of the first of \eqref{eq:19} with respect to $u$ and of the second of \eqref{eq:19} with respect to $v$ and subtracting the resulting expressions, we obtain
\begin{align}
  \label{eq:78}
  \pppp{x}{u}{v}=\frac{1}{\cout-\cin}\left(\frac{\cout}{\cin}\pp{\cin}{v}\pp{x}{u}-\frac{\cin}{\cout}\pp{\cout}{u}\pp{x}{v}\right).
\end{align}
Defining
\begin{align}
  \label{eq:79}
  \mu\defeq \frac{1}{\cout-\cin}\,\frac{\cout}{\cin}\pp{\cin}{v},\qquad \nu\defeq-\frac{1}{\cout-\cin}\,\frac{\cin}{\cout}\pp{\cout}{u}.
\end{align}
this is
\begin{align}
  \label{eq:80}
  \pppp{x}{u}{v}(u,v)=\mu(u,v)\pp{x}{u}(u,v)+\nu(u,v)\pp{x}{v}(u,v).
\end{align}

\subsection{Formulation of the Problem\label{shock_reflection_problem}}
The shock reflection problem is the following: Find a solution of the equations
\begin{align}
  \label{eq:81}
  \pp{x}{u}&=\cin\pp{t}{u},\\
  \label{eq:82}
  \pp{x}{v}&=\cout\pp{t}{v},\\
  \label{eq:83}
  \pp{\alpha}{v}&=0,\\
  \label{eq:84}
  \pp{\beta}{u}&=0
\end{align}
in $T_\varepsilon$, such that along $u=v$ the wall boundary conditions
\begin{align}
  \label{eq:85}
  \alpha(u,u)=\beta(u,u),\qquad x(u,u)=0,
\end{align}
and along $u=av$ the shock boundary condition
\begin{align}
  \label{eq:86}
  \frac{dx_+}{dv}(v)=V(v)\frac{dt_+}{dv}(v)
\end{align}
is satisfied. Here
\begin{align}
  \label{eq:87}
  x_+(v)=x(av,v),\qquad t_+(v)=t(av,v)
\end{align}
and $V$ is given by
\begin{align}
  \label{eq:88}
  V=\frac{\jump{\rho w}}{\jump{\rho}},
\end{align}
where
\begin{align}
  \label{eq:89}
  \rho_\pm(v)=\rho(\alpha_\pm(v),\beta_\pm(v)),\qquad w_\pm(v)=w(\alpha_\pm,\beta_\pm),
\end{align}
where $\rho(\alpha,\beta)$, $w(\alpha,\beta)$ are given smooth functions of their arguments and
\begin{alignat}{3}
  \label{eq:90}
  \alpha_+(v)&=\alpha(av,v),&\qquad\beta_+(v)&=\beta(av,v),\\
  \label{eq:91}
  \alpha_-(v)&=\alpha^\ast(t_+(v),x_+(v)),&\beta_-(v)&=\beta^\ast(t_+(v),x_+(v)).
\end{alignat}
Furthermore, along $u=av$ the jump condition
\begin{align}
  \label{eq:92}
  J(\alpha_+,\beta_+,\alpha_-,\beta_-)=0
\end{align}
is satisfied and at the point of reflection $(u,v)=(0,0)$ we have
\begin{align}
  \label{eq:93}
  \beta(0,0)=\beta_0,\qquad \pp{\beta}{v}(0,0)=\beta_0',\qquad V(0)=V_0.
\end{align}
In addition, the determinism condition has to be satisfied:
\begin{align}
  \label{eq:94}
  \cout_-(v)<V(v)<\cout_+(v),
\end{align}
where
\begin{align}
  \label{eq:95}
  \cout_-(v)=\cout(\alpha_-(v),\beta_-(v)),\qquad\cout_+(v)=\cout(\alpha_+(v),\beta_+(v)).
\end{align}

\section{Solution of the Reflection Problem}
We have the following theorem:
\begin{theorem}[Existence]\label{existence_theorem}
  For $\varepsilon$ sufficiently small, the shock reflection problem as stated in the previous subsection possesses a solution $\alpha$, $\beta$, $t$, $x$ in $C^2(T_\varepsilon)$. Also, for $\varepsilon$ sufficiently small, the Jacobian $\frac{\partial(t,x)}{\partial(u,v)}$ does not vanish in $T_\varepsilon$, which implies that $\alpha$, $\beta$ are $C^2$ functions of $(t,x)$ on the image of $T_\varepsilon$ by the map $(u,v)\mapsto (t(u,v),x(u,v))$.
\end{theorem}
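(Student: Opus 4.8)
The plan is to reduce the problem to a coupled system of integral and functional equations on $T_\varepsilon$ and to solve it by a contraction argument, exploiting that as $\varepsilon\to 0$ every quantity converges to its value in the explicit piecewise-linear solution attached to the reflection point. The first step is the reduction: by \eqref{eq:83}, \eqref{eq:84} the Riemann invariant $\alpha$ depends on $u$ only and $\beta$ on $v$ only, and the wall condition in \eqref{eq:85} forces them to be one and the same function $g$, so that $\alpha(u,v)=g(u)$, $\beta(u,v)=g(v)$ with $g(0)=\beta_0$. Consequently $\rho,w,\eta,\cin,\cout$ and the coefficients $\mu,\nu$ in \eqref{eq:80} are explicit smooth functions of $g(u),g(v),g'(u),g'(v)$, while $\alpha_+(v)=g(av)$, $\beta_+(v)=g(v)$, $V(v)$, $\Gamma(v)$ become functionals of $g$ and of the shock curve $v\mapsto(t_+(v),x_+(v))$ through \eqref{eq:36}, \eqref{eq:87}--\eqref{eq:91}. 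The genuine unknowns are then $g\in C^2([0,\varepsilon])$ and $x\in C^2(T_\varepsilon)$ with $x(u,u)=0$; the function $t$ is recovered from $x$ by integrating $\partial_u t=\partial_u x/\cin$, $\partial_v t=\partial_v x/\cout$ from $t(0,0)=0$, the integrability condition for this pair being exactly \eqref{eq:80} and hence satisfied automatically.

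Next come the two building blocks. Equations \eqref{eq:75}, \eqref{eq:77}, with $\partial_u\partial_v x$ replaced by $\mu\,\partial_u x+\nu\,\partial_v x$ from \eqref{eq:80}, form a closed linear system of Volterra type for the pair $(\partial_u x,\partial_v x)$, whose integral operators act over intervals of length $\Landau(\varepsilon)$; after solving it, $x$ is recovered by integrating $\partial_v x$ from the wall along incoming characteristics. The jump condition \eqref{eq:92} has already been inverted (implicit function theorem and the nonvanishing \eqref{eq:53}) to the functional equation $g(v)=H\big(g(av),\alpha_-(v),\beta_-(v)\big)$ with $\alpha_-(v)=\alpha^\ast(t_+(v),x_+(v))$, $\beta_-(v)=\beta^\ast(t_+(v),x_+(v))$, where the shock curve lies inside the future development for $\varepsilon$ small (its origin-slope $V_0$ lying strictly between the slopes of the wall and of $\mathcal{B}$, by \eqref{eq:15}), so $\alpha^\ast,\beta^\ast$ are defined along it; since $\partial H/\partial\alpha_+=F$ with $F_0=-a^2$ and $0<a<1$ (see \eqref{eq:60}, \eqref{eq:61}), for small $\varepsilon$ the map $g\mapsto\big(v\mapsto H(g(av),\alpha_-(v),\beta_-(v))\big)$ contracts in $C^0$, and, after differentiating via \eqref{eq:57}, \eqref{eq:62}, in $C^1$ and $C^2$ as well (contraction factors near $a^2,a^3,a^4$); its fixed point automatically realises $g(0)=\beta_0$, $g'(0)=\beta_0'$ (this is precisely \eqref{eq:54}, \eqref{eq:72}), and $V(0)=V_0$ is then automatic.

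I would then set up an outer map $\Phi$ on a small closed ball in a product of (weighted) $C^2$ spaces over $[0,\varepsilon]$ and $T_\varepsilon$, centred at the explicit solution $g\equiv\beta_0$, $x=v-u$, $t=(u+v)/\eta_0$, $\Gamma\equiv-1$, $\mu=\nu=0$ of the constant-coefficient problem: given $(g,x)$, compute the fluid quantities and $\mu,\nu$ from $g$, the shock curve $(t_+,x_+)$, $\alpha_-,\beta_-$, $V$, $\Gamma$ from $x$ (and the recovered $t$), solve the linear $x$-system for a new $\tilde x$, solve the functional equation for a new $\tilde g$, and output $(\tilde g,\tilde x)$. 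For $\varepsilon$ small $\Phi$ preserves the ball and is a contraction, and its fixed point is the desired $C^2(T_\varepsilon)$ solution; openness of the strict inequalities \eqref{eq:15}, \eqref{eq:17} together with continuity in $\varepsilon$ then yields the determinism condition \eqref{eq:94} and $V>0$ throughout $T_\varepsilon$. Finally, at the reflection point $\partial_u x=-1$, $\partial_v x=1$ (see \eqref{eq:30}) and $\cin=-\eta_0$, $\cout=\eta_0$, so $\partial(t,x)/\partial(u,v)\big|_0=\partial_u x\,\partial_v x\,(\cin^{-1}-\cout^{-1})\big|_0=2/\eta_0\neq 0$; since the solution is $C^1$-close to the linear one on the shrinking triangle, the Jacobian stays bounded away from zero on $T_\varepsilon$ and, $T_\varepsilon$ being convex, $(u,v)\mapsto(t,x)$ is injective there (a small perturbation of an injective linear map), hence a $C^2$ diffeomorphism onto its image, and composing $\alpha=g(u)$, $\beta=g(v)$ with the inverse makes them $C^2$ functions of $(t,x)$.

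The main obstacle is the circular coupling characteristic of the free boundary: $\mu,\nu$ and $\Gamma$ depend on $g$ and on the shock curve, the shock curve depends on $x$, and $x$ depends on $\mu,\nu,\Gamma$ — a loop broken only by the smallness of $\varepsilon$. More technically, the $x$-operator is \emph{not} a naive $C^0$-contraction, since the term $-\Gamma(u)\,\partial_u x(au,u)$ in \eqref{eq:75} carries a coefficient of size $|\Gamma|\approx 1$; one must use that iterating this term drives the evaluation point to the origin at geometric rate $a$, where the normalisation pins $\partial_u x=-1$, which is what forces the use of weighted norms (or an eventually-contracting fixed-point argument), and one must check that the second-order estimates — where the quadratic term $G$ of \eqref{eq:62} and the differentiated integral equations for $x$ enter — close without destroying the smallness.
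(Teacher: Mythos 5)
Your proposal is correct and follows essentially the same route as the paper: an iteration/contraction on the pair $(\beta_+,x)=(g,x)$ in $C^2$, centred at the explicit linear solution, with $t$, $\alpha$, $\beta$, $\alpha_\pm$, $\beta_\pm$, $V$, $\Gamma$, $\mu$, $\nu$ recovered as functionals, the jump condition inverted via \eqref{eq:53}--\eqref{eq:56}, the shock kept inside the future development by \eqref{eq:15}, \eqref{eq:17}, and the Jacobian computed exactly as in \eqref{eq:364}. The one technical point you flag (the coefficient $|\Gamma|\approx 1$ in \eqref{eq:75}) is resolved in the paper precisely by the device you anticipate: the norm \eqref{eq:121} measures only second derivatives with the zeroth- and first-order data pinned at the origin, so that differentiating \eqref{eq:75} turns the dangerous coefficient into $a|\Gamma_m|\approx a<1$ (see \eqref{eq:229}, \eqref{eq:232}), and the convergence is closed by the $2\times 2$ matrix inequality \eqref{eq:330}--\eqref{eq:332} with eigenvalues $k\pm C\sqrt{\varepsilon}<1$, matching your contraction factors $a$ and $a^4$.
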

The proof of this theorem is based on an iteration scheme employed in the following subsections.

\subsection{Setup of the Iteration Scheme}
We solve the reflection problem using an iteration. We base our iteration scheme on the functions $x(u,v)$ and $\beta_+(v)$. The iteration scheme is as follows:
\begin{enumerate}
\item We initiate the sequence by\footnote{The index $0$ on the left of these equations denotes the $0$'th iterate in our sequence of functions. This is the only place where the index $0$ is used in this way, everywhere else it is used to denote evaluation of quantities at the point of reflection (which corresponds to the origin of our coordinates), hence no confusion should arise.}
  \begin{align}
    \label{eq:96}
    x_0(u,v)=v-u,\qquad \beta_{+0}(v)=\beta_0+\beta_0'v.
  \end{align}

\item We start with approximate solutions $x_m\in C^2(T_\varepsilon)$, $\beta_{+m}\in C^2[0,\varepsilon]$.
\item We set
  \begin{align}
    \label{eq:97}
    \beta_m(u,v)=\beta_{+m}(v),\qquad\alpha_m(u,v)=\beta_{+m}(u).    
  \end{align}
\item We set
  \begin{align}
  \label{eq:98}
    t_m(u,v)=\int_0^u\left(\phi_m+\psi_m\right)(u',u')du'+\int_u^v\psi_m(u,v')dv',
  \end{align}
  where we set
\begin{align}
  \label{eq:99}
  \phi_m(u,v)&=\frac{1}{\cin_m(u,v)}\pp{x_m}{u}(u,v),\\
    \label{eq:100}
  \psi_m(u,v)&=\frac{1}{\cout_m(u,v)}\pp{x_m}{v}(u,v).
\end{align}
Here we use the notation
\begin{align}
  \label{eq:101}
  \cin_m(u,v)=\cin(\alpha_m(u,v),\beta_m(u,v)),\qquad\cout_m(u,v)=\cout(\alpha_m(u,v),\beta_m(u,v)).
\end{align}

\item We then compute the quantities appearing in the jumps evaluated in the state ahead:
\begin{align}
  \label{eq:102}
  \alpha_{-m}(v)&=\alpha^\ast(t_{+m}(v),x_{+m}(v)),\\
    \label{eq:103}
  \beta_{-m}(v)&=\beta^\ast(t_{+m}(v),x_{+m}(v)),
\end{align}
where
\begin{align}
  \label{eq:104}
  t_{+m}(v)=t_m(av,v),\qquad x_{+m}(v)=x_m(av,v)
\end{align}
and $\alpha^\ast(t,x)$, $\beta^\ast(t,x)$ are given by the solution in the state ahead.
\item The quantities in the sate behind are given by
  \begin{align}
    \label{eq:105}
    \beta_{+m}(v)=\beta_m(av,v),\qquad \alpha_{+m}(v)=\alpha_m(av,v)=\beta_{+m}(av).
  \end{align}
Hence all quantities appearing in the jumps are given and we compute $V_m$ using
\begin{align}
  \label{eq:106}
  V_m=\frac{\jump{\rho_m w_m}}{\jump{\rho_m}},
\end{align}
where $\jump{f_m}=f_{+m}-f_{-m}$ and
\begin{align}
  \label{eq:107}
  \rho_{\pm m}(v)=\rho(\alpha_{\pm m}(v),\beta_{\pm m}(v)),\qquad w_{\pm m}(v)=w(\alpha_{\pm m}(v),\beta_{\pm m}(v))
\end{align}

From this we compute
  \begin{align}
    \label{eq:108}
    \Gamma_m= a\frac{{\cout}_{+m}}{{\cin}_{+m}}\,\frac{V_m-{\cin}_{+m}}{{\cout}_{+m}-V_m},
  \end{align}
where
\begin{align}
  \label{eq:109}
  \cin_{+m}(v)=\cin(\alpha_{+m}(v),\beta_{+m}(v)),\qquad\cout_{+m}(v)=\cout(\alpha_{+m}(v),\beta_{+m}(v))
\end{align}

\item We set
  \begin{align}
    \label{eq:110}
    x_{m+1}(u,v)=v-u+\int_u^v\Phi_{m}(u,v')dv',
  \end{align}
where
\begin{align}
  \label{eq:111}
  \Phi_{m}(u,v)=\int_0^v\Lambda_m(u')du'+\int_{av}^uM_m(u',v)du',
\end{align}
    where we set
    \begin{align}
    \label{eq:112}
      M_m(u,v)&=\mu_{m}(u,v)\pp{x_{m}}{u}(u,v)+\nu_{m}(u,v)\pp{x_{m}}{v}(u,v),\\
      \label{eq:113}
      \Lambda_m(u)&=\Gamma_m(u)a\ppp{x_m}{u}(au,u)+\Gamma'(u)\pp{x_m}{u}(au,u)+\Gamma_m(u)M_m(au,u).
    \end{align}
    where the coefficients $\mu_m$, $\nu_m$ are given in terms of $\alpha_m$, $\beta_m$ and their derivatives.
\item We compute $\beta_{+,m+1}(v)$ as
  \begin{align}
    \label{eq:114}
\beta_{+,m+1}(v)=H\Big(\alpha_{+m}(v),\alpha_{-m}(v),\beta_{-m}(v)\Big).
  \end{align}
\end{enumerate}

\subsection{Remarks on the Iteration Scheme}
\begin{enumerate}
\item The way we set things up, to each pair of functions $x_m$, $\beta_{+m}$ there corresponds a unique pair of functions $t_m$, $\alpha_{+m}$. It therefore suffices to show that the iteration mapping maps the respective spaces to itself (by induction) and the convergence only for the sequence $((x_m,\beta_{+m});m=0,1,2,\ldots)$.
\item From \eqref{eq:97} we see that for every member of the sequence the equations \eqref{eq:83}, \eqref{eq:84} as well as the first wall boundary condition in \eqref{eq:85} are satisfied. I.e.
\begin{align}
  \label{eq:115}
  \pp{\alpha_m}{v}=0,\qquad \pp{\beta_m}{u}=0,\qquad \alpha_m(u,u)=\beta_m(u,u).
\end{align}
Also we see that by \eqref{eq:110} the second wall boundary condition in \eqref{eq:85} is satisfied. I.e.
\begin{align}
  \label{eq:116}
   x_m(u,u)=0.
\end{align}
\item It will be shown in section \ref{inductive_step} that taking the partial derivative of \eqref{eq:110} with respect to $u$ yields an equation of the form
\begin{align}
  \label{eq:117}
  \ppp{x_{m+1}}{u}(u,v)=-\Gamma_m(u)a\ppp{x_m}{u}(au,u)+\ldots
\end{align}
The terms not written out are either of mixed derivative type, of lower order, or mixed type third derivatives but appearing in an integral. Equation \eqref{eq:117} corresponds to \eqref{eq:75} once a partial derivative with respect to $u$ is taken and the $m+1$'th iterate is placed in the left hand side and the $m$'th iterate is placed in the right hand side. Since $0<a<1$ and $\Gamma_m(0)=-1$, the overall prefactor in front of $\ppp{x_m}{u}(au,u)$ can be made strictly smaller than one, which is crucial in establishing that the sequence of iterates is suitably confinded and that it is convergent.
\item In step 5 of our iteration scheme we have to make sure that the shock curve corresponding to the $m$'th iterate: $v\mapsto (t_{+m}(v),x_{+m}(v))$ lies in the future development of the data. Otherwise the evaluation of the quantities $\alpha^\ast(t_{+m}(v),x_{+m}(v))$, $\beta^\ast(t_{+m}(v),x_{+m}(v))$ would be meaningless.
\end{enumerate}

\subsection{Inductive Step\label{inductive_step}}
We choose closed balls in function spaces as follows:
\begin{align}
  \label{eq:118}
  B_{B}&=\Big\{f\in C^2[0,\varepsilon]:f(0)=\beta_0,f'(0)=\beta_0',\|f\|_1\leq B\Big\},\\
    \label{eq:119}
  B_{N_0}&=\Big\{f\in C^2(T_\varepsilon):f(0,0)=0,-\pp{f}{u}(0,0)=1=\pp{f}{v}(0,0),\|f\|_2\leq N_0\Big\},
\end{align}
where we use
\begin{align}
  \label{eq:120}
  \|f\|_1&\defeq\sup_{[0,\varepsilon]}|f''|,\\
  \label{eq:121}
  \|f\|_2&\defeq\max\left\{\sup_{T_\varepsilon}\left|\ppp{f}{u}\right|,\sup_{T_\varepsilon}\left|\pppp{f}{u}{v}\right|,\sup_{T_\varepsilon}\left|\ppp{f}{v}\right|\right\}.
\end{align}
For the constants $\beta_0$, $\beta_0'$ see \eqref{eq:54}, \eqref{eq:72} respectively.
\begin{proposition}
  Choosing the constants $B$, $N_0$ appropriately, the sequence
  \begin{align}
    \label{eq:122}
    ((\beta_{+m},x_m);m=0,1,2,\ldots)   
  \end{align}
is contained in $B_B\times B_{N_0}$, provided we choose $\varepsilon$ sufficientely small.
\end{proposition}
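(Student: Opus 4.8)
The plan is to show that the iteration map $\Psi:(\beta_{+m},x_m)\mapsto(\beta_{+,m+1},x_{m+1})$ defined by steps (1)--(9) carries $B_B\times B_{N_0}$ into itself, arguing by induction on $m$. One first fixes the geometry: since the reference data $\alpha^\ast,\beta^\ast$ are smooth on the future development and the base point $(t_{+m}(0),x_{+m}(0))=(0,0)$ lies on the boundary characteristic $\mathcal B$, a continuity/smallness argument in $\varepsilon$ guarantees that for any $(\beta_{+m},x_m)\in B_B\times B_{N_0}$ the curve $v\mapsto(t_{+m}(v),x_{+m}(v))$ stays inside the future development (cf.\ Remark 2(4)), so that $\alpha_{-m},\beta_{-m}$ and hence $V_m,\Gamma_m$ are well defined; moreover the determinism condition \eqref{eq:94} persists for $\varepsilon$ small because it holds strictly at $v=0$ by \eqref{eq:15}. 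All the composite functions $\cin_m,\cout_m,\mu_m,\nu_m,H,F,M_1,M_2$ are then smooth functions of their arguments evaluated at points lying in a fixed compact neighborhood of the reflection-point values, so they and their relevant derivatives are bounded by a constant depending only on $B,N_0$ (and on the fixed data), uniformly in $\varepsilon\le 1$.

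Next one checks that $\Psi$ preserves the pointwise constraints at the origin. For the $\beta$-component this is immediate: $\beta_{+,m+1}(0)=H(\alpha_{+m}(0),\alpha_{-m}(0),\beta_{-m}(0))=H(\beta_0,\alpha_{-0},\beta_{-0})=\beta_0$ by \eqref{eq:54}, and differentiating \eqref{eq:114} at $v=0$ and using \eqref{eq:41} (which holds for every iterate through \eqref{eq:97}, \eqref{eq:105}) together with \eqref{eq:70}--\eqref{eq:71} reproduces exactly the relation \eqref{eq:72} that defines $\beta_0'$, so $\beta_{+,m+1}'(0)=\beta_0'$. For the $x$-component, \eqref{eq:110} gives $x_{m+1}(u,u)=0$ and $x_{m+1}(0,0)=0$ automatically; evaluating $\partial x_{m+1}/\partial v$ and $\partial x_{m+1}/\partial u$ at the origin, the integral terms vanish and one is left with $\partial_v x_{m+1}(0,0)=1$, $\partial_u x_{m+1}(0,0)=-1$, so $x_{m+1}\in C^2$ with the right first-order data at $(0,0)$.

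The heart of the matter is the norm bounds $\|\beta_{+,m+1}\|_1\le B$ and $\|x_{m+1}\|_2\le N_0$. For $\beta_{+,m+1}$ one differentiates \eqref{eq:114} twice, obtaining an expression of the type \eqref{eq:62}: $\beta_{+,m+1}''=F\,\alpha_{+m}''+M_1\beta_{-m}''+M_2\alpha_{-m}''+G(\dots)$. Here $\alpha_{+m}''(v)=a^2\beta_{+m}''(av)$ is controlled by $B$ with the good factor $a^2<1$; the $\alpha_{-m}'',\beta_{-m}''$ terms involve two derivatives of $t_{+m},x_{+m}$, hence $\partial^2 x_m$, i.e.\ the $N_0$-norm, but they come multiplied by quantities that are $O(\varepsilon)$ once one subtracts off their values at $v=0$ (or by using that $t_{+m}',x_{+m}'$ differences are small); and $G$ is lower order. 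Choosing $B$ larger than the $\varepsilon$-independent part and then $\varepsilon$ small absorbs the rest, giving $\|\beta_{+,m+1}\|_1\le B$. For $x_{m+1}$, differentiate \eqref{eq:110}: $\partial_v x_{m+1}=1+\Phi_m$, so $\partial_v^2 x_{m+1}=\partial_v\Phi_m$ and $\partial_u\partial_v x_{m+1}=\partial_u\Phi_m=\Lambda_m\cdot(\text{boundary terms})+M_m(u,v)$-type expressions, all bounded by $C(B,N_0)\varepsilon$ plus lower-order pieces. The delicate term is $\partial_u^2 x_{m+1}$: as indicated in Remark 2(3), differentiating \eqref{eq:110} in $u$ produces the structure \eqref{eq:117}, namely $\partial_u^2 x_{m+1}(u,v)=-\Gamma_m(u)\,a\,\partial_u^2 x_m(au,u)+(\text{mixed second derivatives, lower order, and integrated third-order terms})$. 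Since $\Gamma_m(0)=-1+O(\varepsilon)$ (from \eqref{eq:38} and continuity) and $a<1$, the prefactor $-\Gamma_m(u)a$ has sup norm $\le a+C\varepsilon<1$ on $[0,\varepsilon]$; thus $\sup_{T_\varepsilon}|\partial_u^2 x_{m+1}|\le (a+C\varepsilon)N_0 + C(B,N_0)\varepsilon$, which is $\le N_0$ once $N_0$ is fixed large relative to the $\varepsilon$-independent remainder and $\varepsilon$ is then shrunk. The one genuine subtlety is ordering the choices correctly — $a$ is determined by the data, then $B$ and $N_0$ are chosen (in that order, since the $x$-estimates use $B$), and only afterwards is $\varepsilon$ taken small enough that every $C(B,N_0)\varepsilon$ error term is dominated; the contraction factor $a<1$ in the $\partial_u^2$ estimate is what makes a fixed $N_0$ possible rather than forcing $N_0$ to grow with $m$.

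I expect the main obstacle to be precisely this $\partial_u^2 x_{m+1}$ bound: one must carry out the $u$-differentiation of the nested integrals in \eqref{eq:110}--\eqref{eq:113} carefully, identify which terms are genuinely $O(N_0)$ (only the $-\Gamma_m a\,\partial_u^2 x_m(au,u)$ term) versus which are $O(\varepsilon)\cdot O(N_0)$ or lower order, and confirm that the third-order derivatives of $x_m$ that appear do so only under integral signs of length $O(\varepsilon)$ so that they contribute $O(N_0\varepsilon)$ rather than $O(N_0)$. Once the bookkeeping is arranged so that $\partial_u^2 x_{m+1}$ is controlled by $(a+C\varepsilon)N_0$ plus a term independent of $N_0$, the induction closes and the proposition follows.
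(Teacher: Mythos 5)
Your overall strategy is the same as the paper's: induction in $m$, the verification that the iterate shock curve stays inside the future development via the strict inequality $V_0>(\cout_0^\ast)_0$, the origin conditions from \eqref{eq:54}, \eqref{eq:41} and \eqref{eq:72}, the contraction factor $a<1$ in front of $\ppp{x_m}{u}(au,u)$ for the $x$-component, and the factor $a^4$ (from $F_0=-a^2$ together with $\alpha_{+m}''(v)=a^2\beta_{+m}''(av)$) for the $\beta$-component. The one place where your argument would not close as written is the order in which you fix $B$ and $N_0$, together with the claim you use to justify it. You propose to choose $B$ first ``since the $x$-estimates use $B$'', and to compensate you assert that the $N_0$-sized terms in the estimate for $\beta_{+,m+1}''$, namely $M_1\beta_{-m}''+M_2\alpha_{-m}''$, ``come multiplied by quantities that are $O(\varepsilon)$ once one subtracts off their values at $v=0$''. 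That assertion is false: by \eqref{eq:196}, $\alpha_{-m}''$ contains $\pp{\alpha^\ast}{t}\,t_{+m}''+\pp{\alpha^\ast}{x}\,x_{+m}''$, whose coefficients are fixed data at the reflection point and whose second factors are of size $N_0$ with no small prefactor (see \eqref{eq:182}, \eqref{eq:185}, \eqref{eq:197}); likewise $M_1$, $M_2$ are $O(1)$, not $O(\varepsilon)$. So the $\beta$-estimate is genuinely of the form $\|\beta_{+,m+1}\|_1\le a^4B+C(N_0)$ (this is \eqref{eq:247}--\eqref{eq:249}), and if $B$ is fixed before $N_0$ the required inequality $a^4B+C(N_0)\le B$ cannot be guaranteed.

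The dependency structure is in fact the opposite of what you assume, and recognizing this is what closes the induction. The additive remainder in the $x$-estimate \eqref{eq:232} is independent of $B$ after shrinking $\varepsilon$, because $B$ enters $M_m$, $\Gamma_m$, etc.\ only through terms of the form $\Landau_B(v)$ or through integrands integrated over intervals of length $O(\varepsilon)$ (see \eqref{eq:218}, \eqref{eq:219}), all of which are absorbed by the smallness of $\varepsilon$; whereas the $N_0$-dependence of the $\beta$-estimate is not $\varepsilon$-suppressible. Hence $N_0$ can and must be fixed first, by \eqref{eq:237}, as a purely numerical constant; only then is $B$ chosen, by \eqref{eq:250}, large enough to dominate the now-fixed $N_0$-dependent constant; and finally $\varepsilon$ is shrunk depending on both. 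With this reversal (and with the corresponding correction that the non-contractive terms in \eqref{eq:229} contribute an $O(1)$ constant rather than $O(\varepsilon)$, which changes nothing structurally), your argument coincides with the paper's proof.
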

\begin{proof}
  Since we initiate the sequence by (see \eqref{eq:96})
\begin{align}
  \label{eq:123}
  x_0(u,v)=v-u,\qquad \beta_{+0}(v)=\beta_0+\beta_0'v,
\end{align}
we see that
\begin{align}
  \label{eq:124}
  (\beta_{+0},x_0)\in B_B\times B_{N_0}.
\end{align}

Let now
\begin{align}
  \label{eq:125}
  (\beta_{+m},x_m)\in B_B\times B_{N_0}.
\end{align}
We have to show that from this it follows that
\begin{align}
  \label{eq:126}
  (\beta_{+,m+1},x_{m+1})\in B_B\times B_{N_0}.
\end{align}

We first derive estimates for $\alpha_m(u,v)$ and $\beta_m(u,v)$ and derivatives thereof. The inductive hypothesis for $\beta_{+m}$ is
\begin{align}
  \label{eq:127}
  \beta_{+m}(0)=\beta_0,\qquad \beta_{+m}'(0)=\beta_0',\qquad\sup_{[0,\varepsilon]}\left|\beta_{+m}''\right|\leq B.
\end{align}
From this we obtain
\begin{align}
  \label{eq:128}
  |\beta_{+m}'(v)-\beta'_0|\leq \left|\int_0^v\beta_{+m}''(v')dv'\right|\leq Bv.
\end{align}
In the following we use the notation $f(v)=\Landau_d(v^n)$ to denote
\begin{align}
  \label{eq:129}
  |f(v)|\leq C(d)v^n,
\end{align}
where $C(d)$ is a non-decreasing, continuous function of $d$. Using this, \eqref{eq:128} implies
\begin{align}
  \label{eq:130}
  \beta_{+m}'(v)=\beta_0'+\Landau_B(v).
\end{align}
From this we obtain
\begin{align}
  \label{eq:131}
  \beta_{+m}(v)=\beta_0+\beta'_0v+\Landau_B(v^2).
\end{align}
Now, for any function $f(v)=\Landau_d(v^n)$ we have
\begin{align}
  \label{eq:132}
  |f(v)|\leq C(d)v^n\leq Cv^{n-1},
\end{align}
provided we choose $\varepsilon$ sufficiently small depending on $d$, where on the right we have a fixed numerical constant. I.e.~$f(v)=\Landau(v^{n-1})$ (without an index on the Landau symbol). Hence, from \eqref{eq:131} we have
\begin{align}
  \label{eq:133}
  \beta_{+m}(v)=\beta_0+\Landau(v),
\end{align}
provided we choose $\varepsilon$ sufficiently small. In the following, we are going to use smallness conditions on $\varepsilon$ of this type without mentioning it any further. Since (see \eqref{eq:97})
\begin{align}
  \label{eq:134}
  \beta_m(u,v)=\beta_{+m}(v),\qquad\alpha_m(u,v)=\beta_{+m}(u),
\end{align}
we have
\begin{align}
  \label{eq:135}
  \beta_m(u,v)&=\beta_0+\beta_0'v+\Landau_B(v^2)=\beta_0+\Landau(v),\\
  \label{eq:136}
  \alpha_m(u,v)&=\beta_0+\beta_0'u+\Landau_B(u^2)=\beta_0+\Landau(v).
\end{align}
Taking partial derivatives of \eqref{eq:134} and using \eqref{eq:130} we obtain
\begin{align}
  \label{eq:137}
  \pp{\beta_m}{v}(u,v)=\beta_0'+\Landau_B(v),\qquad \pp{\alpha_m}{u}(u,v)=\beta_0'+\Landau_B(v).
\end{align}
The other first derivatives of $\alpha_m$, $\beta_m$ vanish. In particular, first order derivatives of $\alpha_m$, $\beta_m$ are bounded by a fixed constant, provided we choose $\varepsilon$ sufficiently small. Taking second derivatives of \eqref{eq:134} and using the third of \eqref{eq:127} we obtain
\begin{align}
  \label{eq:138}
  \left|\ppp{\beta_m}{v}(u,v)\right|,\left|\ppp{\alpha_m}{u}(u,v)\right|\leq B.
\end{align}
All other second derivatives of $\alpha_m$, $\beta_m$ vanish. Since
\begin{align}
  \label{eq:139}
  \alpha_{+m}(v)=\alpha_m(av,v)=\beta_{+m}(av),
\end{align}
we have, using \eqref{eq:127}, \eqref{eq:130}, \eqref{eq:131},
\begin{align}
  \label{eq:140}
  \alpha_{+m}(v)=\beta_0+a\beta_0'v+\Landau_B(v^2),\qquad \alpha_{+m}'(v)=a\beta_0'+\Landau_B(v),\qquad |\alpha_{+m}''(v)|\leq a^2B.
\end{align}

We derive properties of $x_m(u,v)$. The inductive hypothesis for $x_m$ is
\begin{align}
  \label{eq:141}
  x_m(0,0)=0,\qquad -\pp{x_m}{u}(0,0)=1&=\pp{x_m}{v}(0,0),\\
    \label{eq:142}
  \sup_{T_\varepsilon}\left|\ppp{x_m}{u}\right|,\sup_{T_\varepsilon}\left|\pppp{x_m}{u}{v}\right|,\sup_{T_\varepsilon}\left|\ppp{x_m}{v}\right|&\leq N_0.
\end{align}
$x_m(u,v)$ satisfies
\begin{align}
  \label{eq:143}
  \pp{x_m}{u}(u,v)&=\pp{x_m}{u}(0,0)+\int_0^u\left(\ppp{x_m}{u}+\pppp{x_m}{u}{v}\right)(u',u')du'+\int_u^v\pppp{x_m}{u}{v}(u,v')dv',\\
  \label{eq:144}
  \pp{x_m}{v}(u,v)&=\pp{x_m}{v}(0,0)+\int_0^u\left(\pppp{x_m}{u}{v}+\ppp{x_m}{v}\right)(u',u')du'+\int_u^v\ppp{x_m}{v}(u,v')dv'.
\end{align}
These equations correspond to integrating the second derivative of $x_m(u,v)$ from the origin $(0,0)$ along $u=v$ until $(u,u)$ and then integrating along an outgoing characteristic until $(u,v)$. Using the inductive hypothesis for $x_m$ we obtain
\begin{align}
  \label{eq:145}
  \pp{x_m}{u}(u,v)&=-1+\Landau_{N_0}(v),\\
  \label{eq:146}
  \pp{x_m}{v}(u,v)&=1+\Landau_{N_0}(v).
\end{align}
Due to $x_m(u,u)=0$ (see \eqref{eq:116}), we have
\begin{align}
  \label{eq:147}
  x_m(u,v)=\int_u^v\pp{x_m}{v}(u,v')dv'.
\end{align}
Using \eqref{eq:146} in \eqref{eq:147} we have
\begin{align}
  \label{eq:148}
  x_m(u,v)=v-u+\Landau_{N_0}(v^2)=\Landau(v).
\end{align}

We derive properties of $t_m(u,v)$. We have (see \eqref{eq:98})
\begin{align}
  \label{eq:149}
  t_m(u,v)=\int_0^u\left(\phi_m+\psi_m\right)(u',u')du'+\int_u^v\psi_m(u,v')dv', 
\end{align}
where
\begin{align}
  \label{eq:150}
  \phi_m=G(\alpha_m,\beta_m)\pp{x_m}{u},\qquad\psi_m=H(\alpha_m,\beta_m)\pp{x_m}{v},
\end{align}
and
\begin{align}
  \label{eq:151}
  G(\alpha,\beta)\defeq \frac{1}{\cin(\alpha,\beta)},\qquad H(\alpha,\beta)\defeq\frac{1}{\cout(\alpha,\beta)}.
\end{align}
We are going to use the notation
\begin{align}
  \label{eq:152}
  g_m(u,v)=G(\alpha_m(u,v),\beta_m(u,v)),\qquad h_m(u,v)=H(\alpha_m(u,v),\beta_m(u,v)).
\end{align}
$\cin(\alpha,\beta)$, $\cout(\alpha,\beta)$ are smooth functions of $\alpha$, $\beta$. In view of \eqref{eq:135}, \eqref{eq:136} provided we choose $\varepsilon$ sufficiently small, so are the functions $G(\alpha,\beta)$, $H(\alpha,\beta)$. Therefore,
\begin{align}
  \label{eq:153}
  g_m(u,v)&=g_0+\left(\pp{G}{\alpha}\right)_0\beta_0'u+\left(\pp{G}{\beta}\right)_0\beta_0'v+\Landau_B(v^2)=-\frac{1}{\eta_0}+\Landau(v),\\
  \label{eq:154}
  h_m(u,v)&=h_0+\left(\pp{H}{\alpha}\right)_0\beta_0'u+\left(\pp{H}{\beta}\right)_0\beta_0'v+\Landau_B(v^2)=\frac{1}{\eta_0}+\Landau(v).
\end{align}
Where we used
\begin{align}
  \label{eq:155}
  g_0=\frac{1}{\cin_0}=\frac{1}{\cin(\alpha_0,\beta_0)}=-\frac{1}{\eta_0},\qquad h_0=\frac{1}{\cout_0}=\frac{1}{\cout(\alpha_0,\beta_0)}=\frac{1}{\eta_0}.
\end{align}
We have
\begin{align}
  \label{eq:156}
  \pp{h_m}{u}=\pp{H}{\alpha}(\alpha_m,\beta_m)\pp{\alpha_m}{u},\qquad \pp{h_m}{v}=\pp{H}{\beta}(\alpha_m,\beta_m)\pp{\beta_m}{v},
\end{align}
which, through \eqref{eq:137} implies
\begin{align}
  \label{eq:157}
  \pp{h_m}{u}(u,v)=\left(\pp{H}{\alpha}\right)_0\beta_0'+\Landau_B(v),\qquad\pp{h_m}{v}(u,v)=\left(\pp{H}{\beta}\right)_0\beta_0'+\Landau_B(v).
\end{align}
We have
\begin{align}
  \label{eq:158}
  \ppp{h_m}{u}&=\ppp{H}{\alpha}(\alpha_m,\beta_m)\left(\pp{\alpha_m}{u}\right)^2+\pp{H}{\alpha}(\alpha_m,\beta_m)\ppp{\alpha_m}{u},\\
  \label{eq:159}
  \pppp{h_m}{u}{v}&=\pppp{H}{\alpha}{\beta}(\alpha_m,\beta_m)\pp{\alpha_m}{u}\pp{\beta_m}{v}.
\end{align}
Therefore, using \eqref{eq:138}, we have
\begin{align}
  \label{eq:160}
  \ppp{h_m}{u}(u,v)=\Landau_B(1),\qquad\pppp{h_m}{u}{v}(u,v)=\Landau(1).
\end{align}

We now derive properties of $t_m(u,v)$ using \eqref{eq:149}. Using \eqref{eq:153}, \eqref{eq:154} together with \eqref{eq:145}, \eqref{eq:146} we obtain
\begin{align}
  \label{eq:161}
  \phi_m(u,v)=\frac{1}{\eta_0}+\Landau_{N_0}(v),\qquad\psi_m(u,v)=\frac{1}{\eta_0}+\Landau_{N_0}(v).
\end{align}
Using these in \eqref{eq:149} we obtain
\begin{align}
  \label{eq:162}
  t_m(u,v)&=\frac{1}{\eta_0}(u+v)+\Landau_{N_0}(v^2)=\Landau(v),\\
  \label{eq:163}
  \pp{t_m}{v}(u,v)&=\psi_m(u,v)=\frac{1}{\eta_0}+\Landau_{N_0}(v).
\end{align}
From \eqref{eq:149} we have
\begin{align}
  \label{eq:164}
  \pp{t_m}{u}(u,v)=\phi_m(u,u)+\int_u^v\pp{\psi_m}{u}(u,v')dv'.
\end{align}
We have
\begin{align}
  \label{eq:165}
  \pp{\psi_m}{u}=\pp{h_m}{u}\pp{x_m}{v}+h_m\pppp{x_m}{u}{v}.
\end{align}
Using \eqref{eq:154} and the first of \eqref{eq:157} together with \eqref{eq:142}, \eqref{eq:146} we obtain
\begin{align}
  \label{eq:166}
  \pp{\psi_m}{u}(u,v)=\left(\pp{H}{\alpha}\right)_0\beta_0'+\Landau_B(v)+\Landau_{N_0}(1)=\Landau_{N_0}(1).
\end{align}
Using this together with the first of \eqref{eq:161} in \eqref{eq:164} we obtain
\begin{align}
  \label{eq:167}
  \pp{t_m}{u}(u,v)=\frac{1}{\eta_0}+\Landau_{N_0}(v).
\end{align}
We turn to estimates for the second partial derivatives of $t(u,v)$. Since (see \eqref{eq:164})
\begin{align}
  \label{eq:168}
  \pppp{t_m}{u}{v}(u,v)=\pp{\psi_m}{u}(u,v),
\end{align}
we deduce from \eqref{eq:166}
\begin{align}
  \label{eq:169}
  \pppp{t_m}{u}{v}(u,v)=\Landau_{N_0}(1).
\end{align}
Analogous to the expression for $\pp{\psi_m}{u}(u,v)$, we have
\begin{align}
  \label{eq:170}
  \pp{\psi_m}{v}(u,v)=\Landau_{N_0}(1),\qquad  \pp{\phi_m}{u}(u,v)=\Landau_{N_0}(1),\qquad\pp{\phi_m}{v}(u,v)=\Landau_{N_0}(1),
\end{align}
the first of which implies
\begin{align}
  \label{eq:171}
  \ppp{t_m}{v}(u,v)=\pp{\psi_m}{v}(u,v)=\Landau_{N_0}(1).
\end{align}
From \eqref{eq:164} we have
\begin{align}
  \label{eq:172}
  \ppp{t_m}{u}(u,v)=\pp{\phi_m}{u}(u,u)+\pp{\phi_m}{v}(u,u)+\pp{}{u}\left(\int_u^v\pp{\psi_m}{u}(u,v')dv'\right).
\end{align}
For the last term in \eqref{eq:172} we use
\begin{align}
  \label{eq:173}
  \int_u^v\pp{\psi_m}{u}(u,v')dv'&=\int_u^v\left(\pp{h_m}{u}\pp{x_m}{v}+h_m\pppp{x_m}{u}{v}\right)(u,v')dv'\nonumber\\
                               &=\int_u^v\left(\pp{h_m}{u}\pp{x_m}{v}\right)(u,v')dv'\nonumber\\
  &\quad+\left(h_m\pp{x_m}{u}\right)(u,v)-\left(h_m\pp{x_m}{u}\right)(u,u)-\int_u^v\left(\pp{h_m}{v}\pp{x_m}{u}\right)(u,v')dv',
\end{align}
where we integrated by parts for the second term in the bracket in the first line. Using this, the last term in \eqref{eq:172} becomes
\begin{align}
  \label{eq:174}
  \pp{}{u}\left(\int_u^v\pp{\psi_m}{u}(u,v')dv'\right)&=\int_u^v\left(\ppp{h_m}{u}\pp{x_m}{v}+\pp{h_m}{u}\pppp{x_m}{u}{v}\right)(u,v')dv'-\left(\pp{h_m}{u}\pp{x_m}{v}\right)(u,u)\nonumber\\
                                                    &\qquad+\left(\pp{h_m}{u}\pp{x_m}{u}\right)(u,v)+\left(h_m\ppp{x_m}{u}\right)(u,v)\nonumber\\
                                                    &\qquad-\left(\pp{h_m}{u}\pp{x_m}{u}\right)(u,u)-\left(h_m\ppp{x_m}{u}\right)(u,u)\nonumber\\
                                                    &\qquad-\left(\pp{h_m}{v}\pp{x_m}{u}\right)(u,u)-\left(h_m\pppp{x_m}{u}{v}\right)(u,u)\nonumber\\
                                                      &\qquad -\int_u^v\left(\pppp{h_m}{u}{v}\pp{x_m}{u}+\pp{h_m}{v}\ppp{x_m}{u}\right)(u,v')dv'\nonumber\\
&\hspace{50mm}  +\left(\pp{h_m}{v}\pp{x_m}{u}\right)(u,u).
\end{align}
(We don't carry out the cancellation in the last two lines in favour of readability). Using \eqref{eq:157}, \eqref{eq:160} together with \eqref{eq:142}, \eqref{eq:145}, \eqref{eq:146} we obtain
\begin{align}
  \label{eq:175}
  \pp{}{u}\left(\int_u^v\pp{\psi_m}{u}(u,v')dv'\right)=\Landau_{N_0}(1).
\end{align}
Using this in turn in \eqref{eq:172}, together with the expressions for the partial derivatives of $\phi_m$ as given in \eqref{eq:170}, we obtain
\begin{align}
  \label{eq:176}
  \ppp{t_m}{u}(u,v)=\Landau_{N_0}(1).
\end{align}
We summarize the properties of $t_m(u,v)$:
\begin{align}
  \label{eq:177}
  t_m(u,v)&=\frac{1}{\eta_0}(u+v)+\Landau_{N_0}(v^2)=\Landau(v),\\  
  \label{eq:178}
  \pp{t_m}{u}(u,v)&=\frac{1}{\eta_0}+\Landau_{N_0}(v),\qquad   \pp{t_m}{v}(u,v)=\frac{1}{\eta_0}+\Landau_{N_0}(v),\\
  \label{eq:179}
  \ppp{t_m}{u}(u,v)&=\Landau_{N_0}(1),\qquad  \pppp{t_m}{u}{v}(u,v)=\Landau_{N_0}(1),\qquad  \ppp{t_m}{v}(u,v)=\Landau_{N_0}(1). 
\end{align}

We now derive properties of $t_+(v)$, $x_+(v)$. Using \eqref{eq:142}, \eqref{eq:145}, \eqref{eq:146}, \eqref{eq:148} we obtain the following properties of $x_{+m}(v)$:
\begin{align}
  \label{eq:180}
  x_{+m}(v)&=x_m(av,v)=(1-a)v+\Landau_{N_0}(v^2),\\
  \label{eq:181}
  x_{+m}'(v)&=a\pp{x_m}{u}(av,v)+\pp{x_m}{v}(av,v)=1-a+\Landau_{N_0}(v),\\
  \label{eq:182}
  x_{+m}''(v)&=a^2\ppp{x_m}{u}(av,v)+2a\pppp{x_m}{u}{v}(av,v)+\ppp{x_m}{v}(av,v)=\Landau_{N_0}(1).
\end{align}
Similarly, but using \eqref{eq:177}, \eqref{eq:178}, \eqref{eq:179} we obtain the following properties of $t_{+m}(v)$:
\begin{align}
  \label{eq:183}
  t_{+m}(v)&=t_m(av,v)=\frac{1+a}{\eta_0}v+\Landau_{N_0}(v^2),\\
  \label{eq:184}
  t_{+m}'(v)&=a\pp{t_m}{u}(av,v)+\pp{t_m}{v}(av,v)=\frac{1+a}{\eta_0}+\Landau_{N_0}(v).\\
  \label{eq:185}
  t_{+m}''(v)&=a^2\ppp{t_m}{u}(av,v)+2a\pppp{t_m}{u}{v}(av,v)+\ppp{t_m}{v}(av,v)=\Landau_{N_0}(1).
\end{align}

Let $t\mapsto x_{0}^\ast(t)$ describe the outgoing characteristic in the state ahead, starting at the origin (this coincides with the left boundary  $\mathcal{B}$ of the future development of the data, see figures \ref{future_development}, \ref{reflection_problem}). We use the notation
\begin{align}
  \label{eq:186}
  x_{0,m}^\ast(v)=x_{0}^\ast(t_{+m}(v))
\end{align}
We have
\begin{align}
  \label{eq:187}
  \frac{dx_{0,m}^\ast}{dv}(v)=\frac{dx_{0}^\ast}{dt}\Big(t_{+m}(v)\Big)t_{+m}'(v).
\end{align}
Let us denote by $\cout^\ast_0(t)$ the characteristic speed of the outgoing characteristic starting at the origin, i.e.
\begin{align}
  \label{eq:188}
  \frac{dx_0^\ast}{dt}(t)=\cout^\ast_0(t).
\end{align}
$\cout_0^\ast(t)$ being a smooth function of $t$, we have
\begin{align}
  \label{eq:189}
  \cout_0^\ast(t)=(\cout_0^\ast)_0+\Landau(t),
\end{align}
therefore (see \eqref{eq:183}),
\begin{align}
  \label{eq:190}
  \cout_0^\ast(t_{+m}(v))=(\cout_0^\ast)_0+\Landau(v).
\end{align}
Using this together with \eqref{eq:184} in \eqref{eq:187} we obtain
\begin{align}
  \label{eq:191}
  \frac{dx_{0,m}^\ast}{dv}(v)=(\cout_0^\ast)_0\frac{1+a}{\eta_0}+\Landau_{N_0}(v).
\end{align}
Using this and \eqref{eq:181} we have (see also \eqref{eq:33})
\begin{align}
  \label{eq:192}
  x_{+m}'(v)-\frac{dx_{0,m}^\ast}{dv}(v)=\frac{1-a}{V_0}\Big(V_0-(\cout_0^\ast)_0\Big)+\Landau_{N_0}(v).
\end{align}
Hence
\begin{align}
  \label{eq:193}
  x_{+m}(v)-x_{0,m}^\ast(v)=\frac{1-a}{V_0}\Big(V_0-(\cout_0^\ast)_0\Big)v+\Landau_{N_0}(v^2).
\end{align}
By the determinism condition \eqref{eq:15} and \eqref{eq:17}, the factor in front of $v$ in the first term is strictly positive. By choosing $\varepsilon$ sufficiently small, the remainder in \eqref{eq:193} in absolute value can be made less or equal to $\frac{1-a}{V_0}(V_0-(\cout_0^\ast)_0)v$. Hence we obtain
\begin{align}
  \label{eq:194}
  x_{+m}(v)-x_{0,m}^\ast(v)\geq 0
\end{align}
for $v\in[0,\varepsilon]$. I.e.~the curve $v\mapsto (t_{+m}(v),x_{+m}(v))$ lies in the domain of the future development (see figure \ref{reflection_problem}).

We derive properties of $\alpha_{-m}(v)$, $\beta_{-m}(v)$. We have
\begin{align}
  \label{eq:195}
  \alpha_{-m}(v)=\alpha^\ast(t_{+m}(v),x_{+m}(v)),\qquad \beta_{-m}(v)=\beta^\ast(t_{+m}(v),x_{+m}(v)),
\end{align}
where $\alpha^\ast(t,x)$, $\beta^\ast(t,x)$ are smooth functions of their arguments, given by the solution in the future development in the state ahead. The second derivative of $\alpha_{-m}(v)$ is given by
\begin{align}
  \label{eq:196}
  \alpha_{-m}''(v)&=\pp{\alpha^\ast}{t}(t_{+m}(v),x_{+m}(v))t_{+m}''(v)+\pp{\alpha^\ast}{x}(t_{+m}(v),x_{+m}(v))x_{+m}''(v)\nonumber\\
               &\qquad+\ppp{\alpha^\ast}{t}(t_{+m}(v),x_{+m}(v))\left(t_{+m}'(v)\right)^2+2\pppp{\alpha^\ast}{t}{x}(t_{+m}(v),x_{+m}(v))t_{+m}'(v)x_{+m}'(v)\nonumber\\
  &\qquad\qquad+\ppp{\alpha^\ast}{x}(t_{+m}(v),x_{+m}(v))\left(x_{+m}'(v)\right)^2.
\end{align}
Making use of \eqref{eq:180},\ldots,\eqref{eq:185} we deduce
\begin{align}
  \label{eq:197}
  \alpha_{-m}''(v)=\Landau_{N_0}(1).
\end{align}
Together with (see \eqref{eq:181}, \eqref{eq:184})
\begin{align}
  \label{eq:198}
  \alpha_{-m}'(0)&=\left(\pp{\alpha^\ast}{t}\right)_0t_{+m}'(0)+\left(\pp{\alpha^\ast}{x}\right)_0x_{+m}'(0)\nonumber\\
              &=\left(\pp{\alpha^\ast}{t}\right)_0\frac{1}{\eta_0}(1+a)+\left(\pp{\alpha^\ast}{x}\right)_0(1-a)
\end{align}
we obtain
\begin{align}
  \label{eq:199}
  \alpha_{-m}'(v)=\left(\pp{\alpha^\ast}{t}\right)_0\frac{1}{\eta_0}(1+a)+\left(\pp{\alpha^\ast}{x}\right)_0(1-a)+\Landau_{N_0}(v).
\end{align}
With $\alpha_{-m}(0)=\alpha^\ast_0$, this implies
\begin{align}
  \label{eq:200}
  \alpha_{-m}(v)&=\alpha^\ast_0+\left(\pp{\alpha^\ast}{t}\right)_0\frac{1}{\eta_0}(1+a)v+\left(\pp{\alpha^\ast}{x}\right)_0(1-a)v+\Landau_{N_0}(v^2).
\end{align}
We note that analogous estimates hold for $\beta_{-m}(v)$, $\beta_{-m}'(v)$, $\beta_{-m}''(v)$.

We derive properties of $V_m(v)$. $V_m(v)$ is given by
\begin{align}
  \label{eq:201}
  V_m(v)=\frac{\jump{\rho_m(v)w_m(v)}}{\jump{\rho_m(v)}}.
\end{align}
Here
\begin{align}
  \label{eq:202}
  \jump{\rho_m(v)}&=\rho_{+m}(v)-\rho_{-m}(v)\nonumber\\
                &=\rho(\alpha_{+m}(v),\beta_{+m}(v))-\rho(\alpha_{-m}(v),\beta_{-m}(v))
\end{align}
and $\rho(\alpha,\beta)$ is a given smooth function of its arguments. Similarly for $\jump{\rho_m(v)w_m(v)}$. Using the properties for $\beta_{+m}(v)$ as given by \eqref{eq:127}, \eqref{eq:130}, the properties of $\alpha_{+m}(v)$, as given by \eqref{eq:140}, the properties of $\alpha_{-m}(v)$ (and $\beta_{-m}(v)$) as given by \eqref{eq:199}, \eqref{eq:200} (and analogous estimates with $\beta_{-m}$ in the role of $\alpha_{-m}$), we obtain
\begin{align}
  \label{eq:203}
  V_m(v)=V_0+\Landau(v),\qquad |V_m'(v)|\leq C.
\end{align}

We derive properties of $\cin_m(u,v)$, $\cout_m(u,v)$. We recall the notation
\begin{align}
  \label{eq:204}
  \cin_m(u,v)=\cin(\alpha_m(u,v),\beta_m(u,v)),\qquad  \cout_m(u,v)=\cout(\alpha_m(u,v),\beta_m(u,v)).
\end{align}
$\cin(\alpha,\beta)$, $\cout(\alpha,\beta)$ being smooth functions, the argument is analogous to the argument used to derive properties of $g_m(u,v)$, $h_m(u,v)$ (see \eqref{eq:151}, \eqref{eq:152}). Analogous to \eqref{eq:153}, \eqref{eq:154} we have
\begin{align}
  \label{eq:205}
  \cin_m(u,v)&=(\cin)_0+\left(\pp{\cin}{\alpha}\right)_0\beta_0'u+\left(\pp{\cin}{\beta}\right)_0\beta_0'v+\Landau_B(v^2)=-\eta_0+\Landau(v),\\
  \label{eq:206}
  \cout_m(u,v)&=(\cout)_0+\left(\pp{\cout}{\alpha}\right)_0\beta_0'u+\left(\pp{\cout}{\beta}\right)_0\beta_0'v+\Landau_B(v^2)=\eta_0+\Landau(v).
\end{align}
Analogous to \eqref{eq:157} we have
\begin{align}
  \label{eq:207}
  \pp{\cin_m}{u}(u,v)=\left(\pp{\cin}{\alpha}\right)_0\beta_0'+\Landau_B(v),\qquad\pp{\cin_m}{v}(u,v)=\left(\pp{\cin}{\beta}\right)_0\beta_0'+\Landau_B(v).
\end{align}
Analogous to \eqref{eq:160} we have
\begin{align}
  \label{eq:208}
\ppp{\cin_m}{u}(u,v)=\Landau_B(1),\qquad\ppp{\cin_m}{v}(u,v)=\Landau_B(1),\qquad\pppp{\cin_m}{u}{v}(u,v)=\Landau(1).
\end{align}
We note that the analogous estimates hold for $\cout_m(u,v)$ and derivatives thereof.

We derive properties of
\begin{align}
  \label{eq:209}
  \cin_{+m}(v)=\cin(\alpha_{+m}(v),\beta_{+m}(v)),\qquad\cout_{+m}(v)=\cout(\alpha_{+m}(v),\beta_{+m}(v)).
\end{align}
Using the properties for $\beta_{+m}(v)$ as given by \eqref{eq:127}, \eqref{eq:130}, \eqref{eq:131} and the properties for $\alpha_{+m}(v)$ as given by \eqref{eq:140}, we obtain
\begin{alignat}{3}
  \label{eq:210}
  \cin_{+m}(v)&=-\eta_0+\Landau(v),&\qquad \cout_{+m}(v)&=\eta_0+\Landau(v),\\
  \label{eq:211}
  \cin_{+m}'(v)&=\Landau(1),&\cout_{+m}'(v)&=\Landau(1).
\end{alignat}
Using these together with \eqref{eq:203} in
\begin{align}
  \label{eq:212}
      \Gamma_m(v)=a\frac{\cout_{+m}(v)}{\cin_{+m}(v)}\,\frac{V_m(v)-\cin_{+m}(v)}{\cout_{+m}(v)-V_m(v)},
\end{align}
we obtain
\begin{align}
  \label{eq:213}
  \Gamma_m(v)=-1+\Landau(v),\qquad |\Gamma_m'(v)|\leq C,
\end{align}
where for the constant $-1$ in the first equation we also used \eqref{eq:33}.

We derive properties of $M_m(u,v)$. We have (see \eqref{eq:112})
\begin{align}
  \label{eq:214}
  M_m(u,v)=\mu_m(u,v)\pp{x_m}{u}(u,v)+\nu_m(u,v)\pp{x_m}{v}(u,v).
\end{align}
Here (see \eqref{eq:79})
\begin{align}
  \label{eq:215}
  \mu_m=\frac{1}{\cout_m-\cin_m}\frac{\cout_m}{\cin_m}\pp{\cin_m}{v},\qquad \nu_m=-\frac{1}{\cout_m-\cin_m}\frac{\cin_m}{\cout_m}\pp{\cout_m}{u}.
\end{align}
Using the properties of $\cin_m$, as given by \eqref{eq:205}, \eqref{eq:207}, \eqref{eq:208} (and analogous equations for $\cout_m$), we obtain
\begin{align}
  \label{eq:216}
  \mu_m(u,v)=-\frac{1}{2\eta_0}\left(\pp{\cin}{\beta}\right)_0\beta_0'+\Landau_B(v),\qquad\nu_m(u,v)=\frac{1}{2\eta_0}\left(\pp{\cout}{\alpha}\right)_0\beta_0'+\Landau_B(v),\\
  \label{eq:217}
  \pp{\mu_m}{u}(u,v)=\Landau(1),\quad\pp{\mu_m}{v}(u,v)=\Landau_B(1),\quad\pp{\nu_m}{u}(u,v)=\Landau_B(1),\quad\pp{\nu_m}{v}(u,v)=\Landau(1).
\end{align}
Together with \eqref{eq:142}, \eqref{eq:145}, \eqref{eq:146} we obtain
\begin{align}
  \label{eq:218}
  M_m(u,v)&=\frac{1}{2\eta_0}\left(\left(\pp{\cin}{\beta}\right)_0+\left(\pp{\cout}{\alpha}\right)_0\right)\beta_0'+\Landau_B(v)+\Landau_{N_0}(v),\\
  \label{eq:219}
  \pp{M_m}{u}(u,v)&=\Landau_B(1)+\Landau_{N_0}(1),\qquad\pp{M_m}{v}(u,v)= \Landau_B(1)+\Landau_{N_0}(1).
\end{align}

We turn to the properties of $x_{m+1}(u,v)$. We have
\begin{align}
  \label{eq:220}
  x_{m+1}(u,v)=v-u+\int_u^v\Phi_m(u,v')dv',
\end{align}
where
\begin{align}
  \label{eq:221}
  \Phi_{m}(u,v)=\int_0^v\Lambda_m(u')du'+\int_{av}^uM_m(u',v)du',
\end{align}
where $M_m(u,v)$ is given in \eqref{eq:214} and
\begin{align}
  \label{eq:222}
  \Lambda_m(u)&=\Gamma_m(u)a\ppp{x_m}{u}(au,u)+\Gamma_m'(u)\pp{x_m}{u}(au,u)+\Gamma_m(u)M_m(au,u).
\end{align}
The first order derivatives are
\begin{align}
  \label{eq:223}
  \pp{x_{m+1}}{u}(u,v)&=-1-\Phi_{m}(u,u)+\int_u^v\pp{\Phi_{m}}{u}(u,v')dv'\nonumber\\
                      &=-1-\int_0^u\Lambda_m(u')du'-\int_{au}^uM_m(u',u)du'+\int_u^vM_m(u,v')dv',\\
  \label{eq:224}
  \pp{x_{m+1}}{v}(u,v)&=1+\Phi_{m}(u,v)\nonumber\\
                      &=1+\int_0^v\Lambda_m(u')du'+\int_{av}^uM_m(u',v)du'.
\end{align}
From \eqref{eq:223}, \eqref{eq:224} we see that
\begin{align}
  \label{eq:225}
  -\pp{x_{m+1}}{u}(0,0)=1=\pp{x_{m+1}}{v}(0,0).
\end{align}
Taking another derivative of \eqref{eq:223}, \eqref{eq:224}, we obtain
\begin{align}
  \label{eq:226}
  \pppp{x_{m+1}}{u}{v}(u,v)&=M_m(u,v),\\
  \label{eq:227}
  \ppp{x_{m+1}}{u}(u,v)&=-\Lambda_m(u)-2M_m(u,u)+aM_m(au,u)\nonumber\\
  &\qquad-\int_{au}^u\pp{M_m}{v}(u',u)du'+\int_u^v\pp{M_m}{u}(u,v')dv',\\
  \label{eq:228}
  \ppp{x_{m+1}}{v}(u,v)&=\Lambda_m(v)-aM_m(av,v)+\int_{av}^u\pp{M_m}{v}(u',v)du'.
\end{align}
Writing out the expressions for $\Lambda_m(u)$ and $\Lambda_m(v)$, the second and third of these are
\begin{align}
  \label{eq:229}
  \ppp{x_{m+1}}{u}(u,v)&=-\Gamma_{m}(u)a\ppp{x_m}{u}(au,u)-\Gamma_{m}'(u)\pp{x_{m}}{u}(au,u)-\Gamma_{m}(u)M_m(au,u)\nonumber\\
                           &\qquad +aM_m(au,u) -2M_m(u,u)\nonumber\\
  &\qquad-\int_{au}^u\pp{M_m}{v}(u',u)du'+\int_u^v\pp{M_m}{u}(u,v')dv',\\
  \ppp{x_{m+1}}{v}(u,v)&=\Gamma_{m}(v)a\ppp{x_{m}}{u}(av,v)+\Gamma_{m}'(v)\pp{x_{m}}{u}(av,v)+\Gamma_{m}(v)M_m(av,v)\nonumber\\
  \label{eq:230}
                       &\qquad -aM_m(av,v)+\int_{av}^u\pp{M_m}{v}(u',v)du'.
\end{align}
Using \eqref{eq:218} we have
\begin{align}
  \label{eq:231}
  \left|\pppp{x_{m+1}}{u}{v}(u,v)\right|\leq C.
\end{align}
Using  \eqref{eq:145}, \eqref{eq:213}, \eqref{eq:218}, \eqref{eq:219} and the induction hypothesis for $x_m$ we obtain
\begin{align}
  \label{eq:232}
  \left|\ppp{x_{m+1}}{u}(u,v)\right|\leq (1+\Landau(u))aN_0+C.
\end{align}
Therefore, choosing $\varepsilon$ sufficiently small, we obtain
\begin{align}
  \label{eq:233}
  \left|\ppp{x_{m+1}}{u}(u,v)\right|\leq C+aN_0.
\end{align}
Analogously we obtain
\begin{align}
  \label{eq:234}
  \left|\ppp{x_{m+1}}{v}(u,v)\right|\leq C+aN_0.
\end{align}
Taking the supremum in $T_\varepsilon$ of \eqref{eq:231}, \eqref{eq:233}, \eqref{eq:234} we obtain
\begin{align}
  \label{eq:235}
  \sup_{T_\varepsilon}\left|\pppp{x_{m+1}}{u}{v}\right|\leq C,\qquad \sup_{T_\varepsilon}\left|\ppp{x_{m+1}}{u}\right|\leq C+aN_0,\qquad \sup_{T_\varepsilon}\left|\ppp{x_{m+1}}{v}\right|\leq C+aN_0.
\end{align}
Therefore,
\begin{align}
  \label{eq:236}
  \|x_{m+1}\|_2=\max\left\{\sup_{T_\varepsilon}\left|\ppp{x_{m+1}}{u}\right|,\sup_{T_\varepsilon}\left|\pppp{x_{m+1}}{u}{v}\right|,\sup_{T_\varepsilon}\left|\ppp{x_{m+1}}{v}\right|\right\}\leq C+aN_0.
\end{align}
Choosing the constant $N_0$ sufficiently large, such that
\begin{align}
  \label{eq:237}
  \frac{C}{1-a}\leq N_0,
\end{align}
where the constant $C$ on the left is the constant $C$ appearing in \eqref{eq:236}, we obtain
\begin{align}
  \label{eq:238}
  \|x_{m+1}\|_2\leq N_0,
\end{align}
Together with  \eqref{eq:220} (which implies $x_{m+1}(0,0)=0$) and \eqref{eq:225} we see that the function $x_{m+1}(u,v)$ lies in $B_{N_0}$ (compare the definition of $B_{N_0}$ in \eqref{eq:119}).

We derive properties of $\beta_{+,m+1}(v)$. We have
\begin{align}
  \label{eq:239}
  \beta_{+,m+1}(v)=H\Big(\alpha_{+m}(v),\alpha_{-m}(v),\beta_{-m}(v)\Big).
\end{align}
In view of (see \eqref{eq:140}, \eqref{eq:200} (and analogous expressions for $\beta_{-m}$) in conjunction with \eqref{eq:47})
\begin{align}
  \label{eq:240}
  \alpha_{+m}(0)=\beta_0,\qquad\alpha_{-m}(0)=\alpha_{-0},\qquad \beta_{-m}(0)=\beta_{-0},
\end{align}
we have (see \eqref{eq:54})
\begin{align}
  \label{eq:241}
  \beta_{+,m+1}(0)=\beta_0.
\end{align}
For the first derivative we have
\begin{align}
  \label{eq:242}
    \beta_{+,m+1}'(v)&=F\Big(\alpha_{+m}(v),\alpha_{-m}(v),\beta_{-m}(v)\Big)\alpha_{+m}'(v)\nonumber\\
             &\quad+M_1\Big(\alpha_{+m}(v),\alpha_{-m}(v),\beta_{-m}(v)\Big)\beta_{-m}'(v)\nonumber\\
             &\quad+M_2\Big(\alpha_{+m}(v),\alpha_{-m}(v),\beta_{-m}(v)\Big)\alpha_{-m}'(v),
\end{align}
Hence, (see \eqref{eq:140}, \eqref{eq:199} (and the analogous for $\beta_{-m}'(v)$) in conjunction with, \eqref{eq:66}, \eqref{eq:67} and \eqref{eq:61})
\begin{align}
  \label{eq:243}
  \beta_{+,m+1}'(0)=F_0a\beta_0'+M_1\Big(\alpha_0,\alpha_{-0},\beta_{-0}\Big)\beta_-'(0)+M_2\Big(\alpha_0,\alpha_{-0},\beta_{-0}\Big)\alpha_-'(0).
\end{align}
In view of the expression for $\beta_0'$ as given in \eqref{eq:72} we obtain
\begin{align}
  \label{eq:244}
  \beta_{+,m+1}'(0)=\beta_0'.
\end{align}
For the second derivative we have
\begin{align}
  \label{eq:245}
  \beta_{+,m+1}''(v)&=F(\alpha_{+m}(v),\alpha_{-m}(v),\beta_{-m}(v))\alpha_{+m}''(v)\nonumber\\
                    &\qquad +M_1(\alpha_{+m}(v),\alpha_{-m}(v),\beta_{-m}(v))\beta_{-m}''(v)\nonumber\\
                    &\qquad +M_2(\alpha_{+m}(v),\alpha_{-m}(v),\beta_{-m}(v))\alpha_{-m}''(v)\nonumber\\
                    &\qquad +G(\alpha_{+m}(v),\alpha_{-m}(v),\beta_{-m}(v),\alpha_{+m}'(v),\alpha_{-m}'(v),\beta_{-m}'(v)).
\end{align}
Here $F$, $M_1$, $M_1$, $G$ are smooth functions of their arguments. Furthermore, (see \eqref{eq:61}),
\begin{align}
  \label{eq:246}
  F(\alpha_{+m}(0),\alpha_{-m}(0),\beta_{-m}(0))=-a^2.
\end{align}
Using the properties of $\alpha_{-m}$ as given by \eqref{eq:197}, \eqref{eq:199}, \eqref{eq:200} and analogous estimates but with $\beta_{-m}$ in the role of $\alpha_{-m}$, as well as the properties of $\alpha_{+m}$ as given by \eqref{eq:140} we obtain
\begin{align}
  \label{eq:247}
  \left|\beta_{+,m+1}''(v)\right|\leq (a^2+Cv)a^2B+\Landau_{N_0}(1).
\end{align}
Taking into account our above choice of $N_0$, which makes $N_0$ into a fixed numerical constant, we have
\begin{align}
  \label{eq:248}
  \left|\beta_{+,m+1}''(v)\right|\leq a^4B+C.
\end{align}
Taking the supremum in $[0,\varepsilon]$ we have (see \eqref{eq:120} for the definition of $\|f\|_1$)
\begin{align}
  \label{eq:249}
  \|\beta_{+,m+1}\|_1\leq a^4B+C.
\end{align}
Choosing the constant $B$ sufficiently large, such that
\begin{align}
  \label{eq:250}
  \frac{C}{1-a^4}\leq B,
\end{align}
where the constant $C$ is the one appearing in \eqref{eq:249}, we arrive at
\begin{align}
  \label{eq:251}
  \|\beta_{+,m+1}\|_1\leq B.
\end{align}
Together with \eqref{eq:241}, \eqref{eq:244} we conclude that $\beta_{+,m+1}\in B_B$ (see \eqref{eq:118} for the definition of $B_B$). This completes the proof of the inductive step.
\end{proof}

\subsection{Convergence}
\begin{proposition}
For $\varepsilon$ sufficiently small, the sequence
  \begin{align}
\label{eq:252}
    ((\beta_{+m},x_m);m=0,1,2,\ldots)   
  \end{align}
converges in $B_B\times B_{N_0}$.
\end{proposition}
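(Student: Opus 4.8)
\subsection*{Proof proposal}

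The plan is to show that the iteration map, which sends $(\beta_{+m},x_m)$ to $(\beta_{+,m+1},x_{m+1})$ through steps (1)--(9) of the scheme, is a contraction on $B_B\times B_{N_0}$. The previous proposition already guarantees that the whole sequence stays in $B_B\times B_{N_0}$ (and, by \eqref{eq:194}, that each iterated shock curve lies in the future development, so the map is well defined there). Moreover $B_B\times B_{N_0}$ is a closed, hence complete, subset of the Banach space $C^2[0,\varepsilon]\times C^2(T_\varepsilon)$: on $B_B$ and $B_{N_0}$ the quantities $\|\cdot\|_1$, $\|\cdot\|_2$, \emph{together with} the prescribed values at the origin, control the full $C^2$ norm by integration from the origin. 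It therefore suffices to obtain a geometric estimate for the differences of consecutive iterates.

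So I would set $\delta x_m:=x_{m+1}-x_m$ and $\delta\beta_{+m}:=\beta_{+,m+1}-\beta_{+m}$. By \eqref{eq:225}, \eqref{eq:241}, \eqref{eq:244} (and $x_{m+1}(0,0)=0$), these differences vanish together with their relevant first derivatives at the origin: $\delta x_m(0,0)=\partial_u\delta x_m(0,0)=\partial_v\delta x_m(0,0)=0$ and $\delta\beta_{+m}(0)=\delta\beta_{+m}'(0)=0$. In particular $|\delta\beta_{+m}'|\le\varepsilon\|\delta\beta_{+m}\|_1$ on $[0,\varepsilon]$, and $|\partial_u\delta x_m|,|\partial_v\delta x_m|\le C\varepsilon\|\delta x_m\|_2$ on $T_\varepsilon$, with corresponding quadratic bounds for $\delta\beta_{+m}$ and $\delta x_m$ themselves. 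I would then propagate these differences through all the auxiliary quantities --- $\delta\alpha_m,\delta\beta_m$; $\delta t_m$ and its derivatives; the differences of $g_m,h_m$; $\delta\alpha_{\pm m},\delta\beta_{\pm m}$ and their derivatives; $\delta V_m$; $\delta\cin_m,\delta\cout_m$; $\delta\Gamma_m$; $\delta\mu_m,\delta\nu_m$; $\delta M_m$ --- using that every constitutive function ($\rho,w,\cin,\cout,H,F,M_1,M_2,G$, the functions of \eqref{eq:151}, and $\alpha^\ast,\beta^\ast$) is smooth, hence Lipschitz on the relevant compact sets. This step is entirely parallel to the boundedness proof, with bounds replaced by differences of consecutive iterates.

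The heart of the matter is then to take differences in \eqref{eq:226}--\eqref{eq:230} for the second derivatives of $x_{m+1}$ and in \eqref{eq:245} for $\beta_{+,m+1}''$, keeping track of two structural features. First, the top-order self-coupling: in $\partial_u^2 x_{m+1}$ and $\partial_v^2 x_{m+1}$ the leading term is $\pm a\Gamma_m(\cdot)\,\partial_u^2 x_m$ with $|a\Gamma_m|\le a+C\varepsilon$ by \eqref{eq:213}, and in $\beta_{+,m+1}''$ the leading term is $F(\cdot)\,\alpha_{+m}''=F(\cdot)\,a^2\beta_{+m}''(a\,\cdot)$ with $|Fa^2|\le a^4+C\varepsilon$ by \eqref{eq:61}; for $\varepsilon$ small both are $<1$. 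Second, the cross-coupling: $\delta x_{m+1}$ depends on $\delta\beta_{+m}$ only through $\delta\beta_{+m}'$ (which is $O(\varepsilon\|\delta\beta_{+m}\|_1)$) and through integrals over intervals of length $O(\varepsilon)$ of terms containing $\delta\beta_{+m}''$, so this dependence carries an overall factor $\varepsilon$; by contrast $\delta\beta_{+,m+1}$ depends on $\delta x_m$ with an $O(1)$ coefficient, namely through $\delta\alpha_{-m}''$, $\delta\beta_{-m}''$, which involve $\delta x_{+m}''=a^2\delta(\partial_u^2 x_m)+2a\delta(\partial_u\partial_v x_m)+\delta(\partial_v^2 x_m)$. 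Writing $p_m:=\|\delta x_m\|_2$, $q_m:=\|\delta\beta_{+m}\|_1$, these observations give
\begin{align*}
  p_{m+1}\le (a+C\varepsilon)\,p_m+C\varepsilon\,q_m,\qquad q_{m+1}\le C\,p_m+(a^4+C\varepsilon)\,q_m .
\end{align*}

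To conclude, observe that the nonnegative matrix $A_\varepsilon$ governing this recursion is within $O(\varepsilon)$ of the lower-triangular matrix with rows $(a,0)$ and $(C,a^4)$, whose spectral radius is $a<1$; hence, for $\varepsilon$ sufficiently small, $\rho(A_\varepsilon)<1$. By monotonicity of $y\mapsto A_\varepsilon y$ on the nonnegative cone, $(p_m,q_m)\le A_\varepsilon^{\,m}(p_0,q_0)$ componentwise, and $A_\varepsilon^{\,m}\to 0$ geometrically, so $\sum_m (p_m+q_m)<\infty$. Since $\|\delta\beta_{+m}\|_{C^2[0,\varepsilon]}\le C q_m$ and $\|\delta x_m\|_{C^2(T_\varepsilon)}\le C p_m$ (again using the vanishing at the origin), the sequence $((\beta_{+m},x_m);m=0,1,2,\ldots)$ is Cauchy in $C^2[0,\varepsilon]\times C^2(T_\varepsilon)$ and therefore converges in the closed set $B_B\times B_{N_0}$. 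The main obstacle I expect is the second structural point of the previous paragraph --- verifying that the $\delta\beta\!\to\!\delta x$ cross-coupling really carries a factor $\varepsilon$ (so that $A_\varepsilon$ degenerates to a triangular matrix with spectral radius $a$ as $\varepsilon\to0$), which forces a careful term-by-term reading of \eqref{eq:226}--\eqref{eq:230}; everything else is bookkeeping mirroring the boundedness proof.
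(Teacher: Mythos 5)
Your proposal is correct and follows essentially the same route as the paper: difference estimates for consecutive iterates propagated through all auxiliary quantities, culminating in a $2\times2$ matrix recursion for $(\|\Delta\beta_+\|_1,\|\Delta x\|_2)$ whose off-diagonal $\beta\to x$ entry carries the crucial factor $\varepsilon$ and whose diagonal entries are $a+C\varepsilon$ and $a^4+C\varepsilon$, giving spectral radius $<1$ for $\varepsilon$ small (the paper absorbs both diagonal entries into a single $k$ with $a<k<1$ and computes the eigenvalues $k\pm C\sqrt{\varepsilon}$ explicitly). The structural points you flag as the "heart of the matter" are exactly the ones the paper verifies in \eqref{eq:316}--\eqref{eq:329}.
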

\begin{proof}
We use the notation
\begin{align}
  \label{eq:253}
  \Delta_mf\defeq f_m-f_{m-1}.
\end{align}
We first look at differences of $\alpha$ and $\beta$. Using the leading order behavior of $\beta_{+m}(v)$ as given in \eqref{eq:130}, \eqref{eq:131} and recalling the definition of $\|f\|_1$ in \eqref{eq:120}, we have
\begin{align}
  \label{eq:254}
  |\Delta_m\beta_+(v)|\leq v^2\|\Delta_m\beta_{+}\|_1,\qquad
  |\Delta_m\beta_+'(v)|\leq v\|\Delta_m\beta_{+}\|_1,
\end{align}
while from the leading order behavior of $\alpha_{+m}$ as given in \eqref{eq:140} we have
\begin{align}
  \label{eq:255}
  |\Delta_m\alpha_+|\leq v^2\|\Delta_m\beta_{+}\|_1,\qquad|\Delta_m\alpha_+'|\leq av\|\Delta_m\beta_{+}\|_1,\qquad|\Delta_m\alpha_+''|\leq a^2\|\Delta_m\beta_{+}\|_1.
\end{align}
Using $\beta_m(u,v)=\beta_{+m}(v)$, $\alpha_m(u,v)=\beta_{+m}(u)$ we deduce
\begin{align}
  \label{eq:256}
  \Delta_m\pppp{\alpha}{u}{v}&=\Delta_m\ppp{\alpha}{v}=0,\qquad\left|\Delta_m\ppp{\alpha}{u}\right|\leq\|\Delta_m\beta_{+}\|_1,\\
  \label{eq:257}
  \Delta_m\pppp{\beta}{u}{v}&=\Delta_m\ppp{\beta}{u}=0,\qquad\left|\Delta_m\ppp{\beta}{v}\right|\leq\|\Delta_m\beta_{+}\|_1.
\end{align}

We turn to estimates of $\Delta_mx(u,v)$ and derivatives thereof. From \eqref{eq:143}, \eqref{eq:144} together with \eqref{eq:141} we have
\begin{align}
  \label{eq:258}
  \left|\Delta_m\pp{x}{u}\right|&\leq v\left(\sup_{T_\varepsilon}\left|\Delta_m\ppp{x}{u}\right|+\sup_{T_\varepsilon}\left|\Delta_m\pppp{x}{u}{v}\right|\right),\\
  \label{eq:259}
  \left|\Delta_m\pp{x}{v}\right|&\leq v\left(\sup_{T_\varepsilon}\left|\Delta_m\ppp{x}{v}\right|+\sup_{T_\varepsilon}\left|\Delta_m\pppp{x}{u}{v}\right|\right).
\end{align}
Using these in \eqref{eq:147} we obtain
\begin{align}
  \label{eq:260}
  |\Delta_mx|\leq v^2\left(\sup_{T_\varepsilon}\left|\Delta_m\ppp{x}{v}\right|+\sup_{T_\varepsilon}\left|\Delta_m\pppp{x}{u}{v}\right|\right).
\end{align}

We look at $\Delta_mg$, $\Delta_mh$. We recall that
\begin{align}
  \label{eq:261}
  g_m(u,v)=G(\alpha_m(u,v),\beta_m(u,v)),\qquad h_m(u,v)=H(\alpha_m(u,v),\beta_m(u,v)),
\end{align}
where $G(\alpha,\beta)$, $H(\alpha,\beta)$ are smooth functions of their arguments (see \eqref{eq:151} for the definition of those functions). In view of \eqref{eq:131}, \eqref{eq:140}, the points $(\alpha_{+m}(v),\beta_{+m}(v))$ and $(\alpha_{+m-1}(v),\beta_{+m-1}(v))$ lie in a ball in $\mathbb{R}^2$ centered at $(\alpha_0,\beta_0)$. Hence so does the line segment joining them. We therefore have
\begin{align}
  \label{eq:262}
  |\Delta_mg|&\leq Cv^2\|\Delta_m\beta_{+}\|_1,\\
  \label{eq:263}
  \left|\Delta_m\pp{g}{u}\right|,\left|\Delta_m\pp{g}{v}\right|&\leq Cv\|\Delta_m\beta_{+}\|_1,\\
  \label{eq:264}
  \left|\Delta_m\ppp{g}{u}\right|,\left|\Delta_m\ppp{g}{v}\right|&\leq C\|\Delta_m\beta_{+}\|_1,\qquad \left|\Delta_m\pppp{g}{u}{v}\right|\leq Cv\|\Delta_m\beta_{+}\|_1.
\end{align}
The different behavior of the mixed derivative is due to $\pppp{\alpha_m}{u}{v}=0=\pppp{\beta_m}{u}{v}$. The same estimates hold with $h$ in the role of $g$.

We look at $\phi$, $\psi$. We recall
\begin{align}
  \label{eq:265}
  \phi_m(u,v)=g_m(u,v)\pp{x_m}{u}(u,v),\qquad\psi_m(u,v)=h_m(u,v)\pp{x_m}{v}(u,v).
\end{align}
Using \eqref{eq:258}, \eqref{eq:259}, \eqref{eq:262} (and the latter also with $h$ in the role of $g$) we obtain
\begin{align}
  \label{eq:266}
  |\Delta_m\phi|&\leq C\left(v^2\|\Delta_m\beta_{+}\|_1+v\left(\sup_{T_\varepsilon}\left|\Delta_m\ppp{x}{u}\right|+\sup_{T_\varepsilon}\left|\Delta_m\pppp{x}{u}{v}\right|\right)\right),\\
  \label{eq:267}
  |\Delta_m\psi|&\leq C\left(v^2\|\Delta_m\beta_{+}\|_1+v\left(\sup_{T_\varepsilon}\left|\Delta_m\ppp{x}{v}\right|+\sup_{T_\varepsilon}\left|\Delta_m\pppp{x}{u}{v}\right|\right)\right).
\end{align}
Using in addition also \eqref{eq:263} (and this one also with $h$ in the role of $g$) we obtain for differences of derivatives of $\phi$
\begin{align}
  \label{eq:268}
  \left|\Delta_m\pp{\phi}{u}\right|&\leq C\left(v\|\Delta_m\beta_{+}\|_1+\sup_{T_\varepsilon}\left|\Delta_m\ppp{x}{u}\right|+v\sup_{T_\varepsilon}\left|\Delta_m\pppp{x}{u}{v}\right|\right),\\
  \label{eq:269}
  \left|\Delta_m\pp{\phi}{v}\right|&\leq C\left(v\|\Delta_m\beta_{+}\|_1+v\sup_{T_\varepsilon}\left|\Delta_m\ppp{x}{u}\right|+\sup_{T_\varepsilon}\left|\Delta_m\pppp{x}{u}{v}\right|\right)
\end{align}
and for differences of derivatives of $\psi$
\begin{align}
  \label{eq:270}
  \left|\Delta_m\pp{\psi}{u}\right|&\leq C\left(v\|\Delta_m\beta_{+}\|_1+v\sup_{T_\varepsilon}\left|\Delta_m\ppp{x}{v}\right|+\sup_{T_\varepsilon}\left|\Delta_m\pppp{x}{u}{v}\right|\right),\\
  \label{eq:271}
    \left|\Delta_m\pp{\psi}{v}\right|&\leq C\left(v\|\Delta_m\beta_{+}\|_1+\sup_{T_\varepsilon}\left|\Delta_m\ppp{x}{v}\right|+v\sup_{T_\varepsilon}\left|\Delta_m\pppp{x}{u}{v}\right|\right).
\end{align}

We turn to $\Delta_mt(u,v)$. We recall
\begin{align}
  \label{eq:272}
  t_m(u,v)=\int_0^u(\phi_m+\psi_m)(u',u')du'+\int_u^v\psi_m(u,v')dv'.
\end{align}
Using \eqref{eq:266}, \eqref{eq:267} we obtain
\begin{align}
  \label{eq:273}
  |\Delta_mt|\leq C\left(v^3\|\Delta_m\beta_{+}\|_1+v^2\left(\sup_{T_\varepsilon}\left|\Delta_m\ppp{x}{u}\right|+\sup_{T_\varepsilon}\left|\Delta_m\ppp{x}{v}\right|+\sup_{T_\varepsilon}\left|\Delta_m\pppp{x}{u}{v}\right|\right)\right).
\end{align}
From $\pp{t_m}{v}(u,v)=\psi_m(u,v)$ we obtain, using \eqref{eq:267},
\begin{align}
  \label{eq:274}
  \left|\Delta_m\pp{t}{v}\right|\leq C\left(v^2\|\Delta_m\beta_{+}\|_1+v\left(\sup_{T_\varepsilon}\left|\Delta_m\ppp{x}{v}\right|+\sup_{T_\varepsilon}\left|\Delta_m\pppp{x}{u}{v}\right|\right)\right).
\end{align}
From
\begin{align}
  \label{eq:275}
  \pp{t_m}{u}(u,v)=\phi_m(u,u)+\int_u^v\pp{\psi_m}{u}(u,v')dv'
\end{align}
we obtain, using \eqref{eq:266}, \eqref{eq:270},
\begin{align}
  \label{eq:276}
  \left|\Delta_m\pp{t}{u}\right|\leq C\left(v^2\|\Delta_m\beta_{+}\|_1+v\left(\sup_{T_\varepsilon}\left|\Delta_m\ppp{x}{u}\right|+v\sup_{T_\varepsilon}\left|\Delta_m\ppp{x}{v}\right|+\sup_{T_\varepsilon}\left|\Delta_m\pppp{x}{u}{v}\right|\right)\right)
\end{align}
From $\pppp{t_m}{u}{v}(u,v)=\pp{\psi_m}{u}(u,v)$ we obtain, using \eqref{eq:270},
\begin{align}
  \label{eq:277}
  \left|\Delta_m\pppp{t}{u}{v}\right|\leq C\left(v\|\Delta_m\beta_{+}\|_1+v\sup_{T_\varepsilon}\left|\Delta_m\ppp{x}{v}\right|+\sup_{T_\varepsilon}\left|\Delta_m\pppp{x}{u}{v}\right|\right).
\end{align}
From $\ppp{t_m}{v}(u,v)=\pp{\psi_m}{v}(u,v)$ we obtain, using \eqref{eq:271}
\begin{align}
  \label{eq:278}
  \left|\Delta_m\ppp{t}{v}\right|\leq C\left(v\|\Delta_m\beta_{+}\|_1+\sup_{T_\varepsilon}\left|\Delta_m\ppp{x}{v}\right|+v\sup_{T_\varepsilon}\left|\Delta_m\pppp{x}{u}{v}\right|\right).
\end{align}
For $\Delta_m\ppp{t}{u}$ we start with the equation (see \eqref{eq:172})
\begin{align}
  \label{eq:279}
  \ppp{t_m}{u}(u,v)=\pp{\phi_m}{u}(u,u)+\pp{\phi_m}{v}(u,u)+\pp{}{u}\left(\int_u^v\pp{\psi_m}{u}(u,v')dv'\right).
\end{align}
Differences of the first two terms can be estimated by \eqref{eq:268}, \eqref{eq:269}. For the last term we use \eqref{eq:174}. Taking the difference of \eqref{eq:174} and itself but with $\psi_{m-1}$ in the role of $\psi_m$ and using the estimates for the differences $\Delta_mh$ and derivatives thereof as given in \eqref{eq:262}, \eqref{eq:263}, \eqref{eq:264} (but with $h$ in the role of $g$) together with the estimates for the differences $\Delta_mx$ and derivatives thereof as given in \eqref{eq:258}, \eqref{eq:259}, we obtain
\begin{align}
  \label{eq:280}
  \left|\Delta_m\left(\pp{}{u}\left(\int_u^v\pp{\psi}{u}(u,v')dv'\right)\right)\right|&\leq C\Bigg(v\|\Delta_m\beta_{+}\|_1+\sup_{T_\varepsilon}\left|\Delta_m\ppp{x}{u}\right|\nonumber\\
  &\qquad\qquad\qquad+v\sup_{T_\varepsilon}\left|\Delta_m\ppp{x}{v}\right|+\sup_{T_\varepsilon}\left|\Delta_m\pppp{x}{u}{v}\right|\Bigg).
\end{align}
Using this together with \eqref{eq:268}, \eqref{eq:269} to estimate differences of the first two terms in \eqref{eq:279} we obtain
\begin{align}
  \label{eq:281}
  \left|\Delta_m\ppp{t}{u}\right|\leq C\left(v\|\Delta_m\beta_{+}\|_1+\sup_{T_\varepsilon}\left|\Delta_m\ppp{x}{u}\right|+v\sup_{T_\varepsilon}\left|\Delta_m\ppp{x}{v}\right|+\sup_{T_\varepsilon}\left|\Delta_m\pppp{x}{u}{v}\right|\right).
\end{align}
Using the definition of $\|f\|_2$ as given in \eqref{eq:121} we have
\begin{align}
  \label{eq:282}
  |\Delta_mt|&\leq C\Big(v^3\|\Delta_m\beta_{+}\|_1+v^2\|\Delta_mx\|_2\Big),\\
  \label{eq:283}
  \left|\Delta_m\pp{t}{u}\right|,\left|\Delta_m\pp{t}{v}\right|&\leq C\Big(v^2\|\Delta_m\beta_{+}\|_1+v\|\Delta_mx\|_2\Big),\\
  \label{eq:284}
  \left|\Delta_m\pppp{t}{u}{v}\right|,\left|\Delta_m\ppp{t}{v}\right|,\left|\Delta_m\ppp{t}{u}\right|&\leq C\Big(v\|\Delta_m\beta_{+}\|_1+\|\Delta_mx\|_2\Big).
\end{align}
Now we look at $t_{+m}(v)=t_m(av,v)$, $x_{+m}(v)=x_m(av,v)$. From \eqref{eq:282}, \eqref{eq:283}, \eqref{eq:284} we have
\begin{align}
  \label{eq:285}
  |\Delta_mt_+|&\leq C\Big(v^3\|\Delta_m\beta_{+}\|_1+v^2\|\Delta_mx\|_2\Big),\\
  \label{eq:286}
  |\Delta_mt'_+|&\leq C\Big(v^2\|\Delta_m\beta_{+}\|_1+v\|\Delta_mx\|_2\Big),\\
  \label{eq:287}
  |\Delta_mt''_+|&\leq C\Big(v\|\Delta_m\beta_{+}\|_1+\|\Delta_mx\|_2\Big).
\end{align}
From \eqref{eq:258}, \eqref{eq:259}, \eqref{eq:260} we have
\begin{align}
  \label{eq:288}
  |\Delta_mx_+|&\leq v^2\|\Delta_mx\|_2,\\
  \label{eq:289}
  |\Delta_mx_+'|&\leq Cv\|\Delta_mx\|_2,\\
  \label{eq:290}
  |\Delta_mx_+''|&\leq C\|\Delta_mx\|_2.
\end{align}

We turn to estimates for $\Delta_m\alpha_-(v)$, $\Delta_m\beta_-(v)$ and derivatives thereof. In view of
\begin{align}
  \label{eq:291}
  \alpha_{-m}(v)=\alpha^\ast(t_{+m}(v),x_{+m}(v)),\qquad \beta_{-m}(v)=\beta^\ast(t_{+m}(v),x_{+m}(v)),
\end{align}
the above estimates for $\Delta_mt_+$, $\Delta_mx_+$ yield
\begin{align}
  \label{eq:292}
  |\Delta_m\alpha_-|,|\Delta_m\beta_-|&\leq C\Big(v^3\|\Delta_m\beta_{+}\|_1+v^2\|\Delta_mx\|_2\Big),\\
  \label{eq:293}
  |\Delta_m\alpha'_-|,|\Delta_m\beta'_-|&\leq C\Big(v^2\|\Delta_m\beta_{+}\|_1+v\|\Delta_mx\|_2\Big),\\
  \label{eq:294}
  |\Delta_m\alpha''_-|,|\Delta_m\beta''_-|&\leq C\Big(v\|\Delta_m\beta_{+}\|_1+\|\Delta_mx\|_2\Big).
\end{align}

We look at $\Delta_mV(v)$ and its derivative. We recall
 \begin{align}
   \label{eq:295}
   V_m(v)=\frac{\jump{\rho_m(v)w_m(v)}}{\jump{\rho_m(v)}},
 \end{align}
where
\begin{align}
  \label{eq:296}
  \jump{f_m(v)}=f_{+m}(v)-f_{-m}(v)
\end{align}
and
\begin{align}
  \label{eq:297}
  \rho_{\pm m}(v)=\rho(\alpha_{\pm m}(v),\beta_{\pm m}(v)),\qquad   w_{\pm m}(v)=w(\alpha_{\pm m}(v),\beta_{\pm m}(v)).
\end{align}
We recall that $\rho(\alpha,\beta)$, $w(\alpha,\beta)$ are smooth functions of their arguments. In view of \eqref{eq:200} and the analogous with $\beta$ in the role of $\alpha$, the points $(\alpha_{-m}(v),\beta_{-m}(v))$ and $(\alpha_{-m-1}(v),\beta_{-m-1}(v))$ lie in a ball in $\mathbb{R}^2$ centered at $(\alpha_0^\ast,\beta_0^\ast)$. Hence so does the line segment joining them (For the analogous statement involving $\alpha_+$, $\beta_+$, see right above \eqref{eq:262}). Therefore, the above estimates for $\Delta_m\alpha_\pm$, $\Delta_m\beta_\pm$ yield
\begin{align}
  \label{eq:298}
  |\Delta_mV|&\leq Cv^2\Big(\|\Delta_m\beta_{+}\|_1+\|\Delta_mx\|_2\Big),\\
  \label{eq:299}
  |\Delta_mV'|&\leq Cv\Big(\|\Delta_m\beta_{+}\|_1+\|\Delta_mx\|_2\Big).
\end{align}

We turn to estimate $\Delta_m\Gamma$ and its derivative. We recall
\begin{align}
  \label{eq:300}
  \Gamma_m(v)=a\frac{\cout_{+m}(v)}{\cin_{+m}(v)}\,\frac{V_m(v)-\cin_{+m}(v)}{\cout_{+m}(v)-V_m(v)},
\end{align}
where
\begin{align}
  \label{eq:301}
  \cin_{+m}(v)=\cin_m(av,v),\qquad \cout_{+m}(v)=\cout_m(av,v)
\end{align}
and
\begin{align}
  \label{eq:302}
  \cin_m(u,v)=\cin(\alpha_m(u,v),\beta_m(u,v)),\qquad\cout_m(u,v)=\cout(\alpha_m(u,v),\beta_m(u,v)).
\end{align}
We need to establish estimates for $\Delta_m\cin$, $\Delta_m\cout$ and their derivatives. $\cin(\alpha,\beta)$, $\cout(\alpha,\beta)$ are smooth functions of their arguments, so they satisfy the same estimates as $g_m(u,v)$, $h_m(u,v)$ (see \eqref{eq:262}, \eqref{eq:263}, \eqref{eq:264}). I.e.
\begin{align}
  \label{eq:303}
  |\Delta_m\cin|&\leq Cv^2\|\Delta_m\beta_{+}\|_1,\\
  \label{eq:304}
  \left|\Delta_m\pp{\cin}{u}\right|,\left|\Delta_m\pp{\cin}{v}\right|&\leq Cv\|\Delta_m\beta_{+}\|_1,\\
  \label{eq:305}
  \left|\Delta_m\ppp{\cin}{u}\right|,\left|\Delta_m\ppp{\cin}{v}\right|&\leq C\|\Delta_m\beta_{+}\|_1,\qquad \left|\Delta_m\pppp{\cin}{u}{v}\right|\leq Cv\|\Delta_m\beta_{+}\|_1
\end{align}
and the same estimates hold with $\cout$ in the role of $\cin$. Using  \eqref{eq:298}, \eqref{eq:299} together with \eqref{eq:303}, \eqref{eq:304} we obtain
\begin{align}
\label{eq:306}
  |\Delta_m\Gamma|\leq&Cv^2\Big(\|\Delta_m\beta_{+}\|_1+\|\Delta_mx\|_2\Big),\\
\label{eq:307}
  |\Delta_m\Gamma'|\leq&Cv\Big(\|\Delta_m\beta_{+}\|_1+\|\Delta_mx\|_2\Big).
\end{align}

We turn to estimating $\Delta_mM$ and derivatives thereof. In order to do this we first need to estimate differences of $\mu$, $\nu$ and derivatives thereof. We recall
\begin{align}
  \label{eq:308}
  \mu_m(u,v)&=\frac{1}{\cout_m(u,v)-\cin_m(u,v)}\,\frac{\cout_m(u,v)}{\cin_m(u,v)}\,\pp{\cin_m}{v}(u,v),\\
  \label{eq:309}
  \nu_m(u,v)&=\frac{1}{\cout_m(u,v)-\cin_m(u,v)}\,\frac{\cin_m(u,v)}{\cout_m(u,v)}\,\pp{\cout_m}{u}(u,v).
\end{align}
Using \eqref{eq:303}, \eqref{eq:304}, \eqref{eq:305} we obtain
\begin{align}
  \label{eq:310}
  |\Delta_m\mu|,|\Delta_m\nu|&\leq Cv\|\Delta_m\beta_{+}\|_1,\\
  \label{eq:311}
  \left|\Delta_m\pp{\mu}{u}\right|,\left|\Delta_m\pp{\mu}{v}\right|,\left|\Delta_m\pp{\nu}{u}\right|,\left|\Delta_m\pp{\nu}{v}\right|&\leq C\|\Delta_m\beta_{+}\|_1.
\end{align}
In view of
\begin{align}
  \label{eq:312}
  M_m(u,v)=\mu_m(u,v)\pp{x_m}{u}(u,v)+\nu_m(u,v)\pp{x_m}{v}(u,v),
\end{align}
the estimates \eqref{eq:310}, \eqref{eq:311} together with \eqref{eq:258}, \eqref{eq:259} imply
\begin{align}
  \label{eq:313}
  |\Delta_mM|&\leq Cv\Big(\|\Delta_m\beta_{+}\|_1+\|\Delta_mx\|_2\Big),\\
\label{eq:314}
  \left|\Delta_m\pp{M}{u}\right|,\left|\Delta_m\pp{M}{v}\right|&\leq C\Big(\|\Delta_m\beta_{+}\|_1+\|\Delta_mx\|_2\Big).
\end{align}

Now we estimate differences of the second derivatives of $x_{m+1}(u,v)$. From (see \eqref{eq:226})
\begin{align}
  \label{eq:315}
  \pppp{x_{m+1}}{u}{v}(u,v)=M_m(u,v)
\end{align}
we obtain
\begin{align}
  \label{eq:316}
  \left|\Delta_{m+1}\pppp{x}{u}{v}\right|\leq Cv\Big(\|\Delta_m\beta_{+}\|_1+\|\Delta_mx\|_2\Big).
\end{align}
To estimate $\Delta_{m+1}\ppp{x}{u}(u,v)$ we use \eqref{eq:229}:
\begin{align}
  \label{eq:317}
    \ppp{x_{m+1}}{u}(u,v)&=-\Gamma_{m}(u)a\ppp{x_m}{u}(au,u)-\Gamma_{m}'(u)\pp{x_{m}}{u}(au,u)-\Gamma_{m}(u)M_m(au,u)\nonumber\\
                         &\qquad +aM_m(au,u) -2M_m(u,u)\nonumber\\
                         &\qquad-\int_{au}^u\pp{M_m}{v}(u',u)du'+\int_u^v\pp{M_m}{u}(u,v')dv'.
\end{align}
Forming the difference from the first term we get
\begin{align}
  \label{eq:318}
  \left|\Delta_m\left(-\Gamma(u)a\ppp{x}{u}(au,u)\right)\right|&\leq |\Gamma_m(u)a|\left|\Delta_{m}\ppp{x}{u}(au,u)\right|+\left|\ppp{x_{m-1}}{u}(au,u)\right||a\Delta_m\Gamma(u)|\nonumber\\
                                                  &\leq (a+Cu)\left|\Delta_{m}\ppp{x}{u}(au,u)\right|+C|\Delta_m\Gamma(u)|,
\end{align}
where we used \eqref{eq:213}. Using this and also \eqref{eq:258}, \eqref{eq:306}, \eqref{eq:307}, \eqref{eq:313}, \eqref{eq:314} we obtain
\begin{align}
  \label{eq:319}
  \left|\Delta_{m+1}\ppp{x}{u}\right|\leq a\sup_{T_\varepsilon}\left|\Delta_m\ppp{x}{u}\right|+Cv\Big(\|\Delta_m\beta_{+}\|_1+\|\Delta_mx\|_2\Big).
\end{align}
To estimate $\Delta_{m+1}\ppp{x}{v}(u,v)$ we use \eqref{eq:230}:
\begin{align}
  \label{eq:320}
  \ppp{x_{m+1}}{v}(u,v)&=\Gamma_{m}(v)a\ppp{x_{m}}{u}(av,v)+\Gamma_{m}'(v)\pp{x_{m}}{u}(av,v)+\Gamma_{m}(v)M_m(av,v)\nonumber\\
                       &\qquad -aM_m(av,v)+\int_{av}^u\pp{M_m}{v}(u',v)du',
\end{align}
Analogously to the way we arrived at \eqref{eq:319} we obtain
\begin{align}
  \label{eq:321}
  \left|\Delta_{m+1}\ppp{x}{v}\right|\leq a\sup_{T_\varepsilon}\left|\Delta_m\ppp{x}{u}\right|+Cv\Big(\|\Delta_m\beta_{+}\|_1+\|\Delta_mx\|_2\Big).
\end{align}
Choosing $\varepsilon$ sufficiently small, we can rewrite the estimates \eqref{eq:316}, \eqref{eq:319}, \eqref{eq:321} as
\begin{align}
  \label{eq:322}
  \left|\Delta_{m+1}\pppp{x}{u}{v}\right|,\left|\Delta_{m+1}\ppp{x}{u}\right|,\left|\Delta_{m+1}\ppp{x}{v}\right|&\leq Cv\|\Delta_m\beta_{+}\|_1+k\|\Delta_mx\|_2,
\end{align}
where $k$ is a constant satisfying
\begin{align}
  \label{eq:323}
  0<a<k<1.
\end{align}
I.e.~we have
\begin{align}
  \label{eq:324}
  \|\Delta_{m+1}x\|_2\leq C\varepsilon\|\Delta_m\beta_{+}\|_1+k\|\Delta_mx\|_2.
\end{align}

We turn to estimate $\Delta_{m+1}\beta_+''$. We use \eqref{eq:245}
\begin{align}
  \label{eq:325}
  \beta_{+,m+1}''(v)&=F(\alpha_{+m}(v),\alpha_{-m}(v),\beta_{-m}(v))\alpha_{+m}''(v)\nonumber\\
                    &\qquad +M_1(\alpha_{+m}(v),\alpha_{-m}(v),\beta_{-m}(v))\beta_{-m}''(v)\nonumber\\
                    &\qquad +M_2(\alpha_{+m}(v),\alpha_{-m}(v),\beta_{-m}(v))\alpha_{-m}''(v)\nonumber\\
                    &\qquad +G(\alpha_{+m}(v),\alpha_{-m}(v),\beta_{-m}(v),\alpha_{+m}'(v),\alpha_{-m}'(v),\beta_{-m}'(v))
\end{align}
and recall that $F$, $M_1$, $M_1$, $G$ are smooth functions of their arguments. Furthermore we recall, (see \eqref{eq:61}),
\begin{align}
\label{eq:326}
  F(\alpha_{+m}(0),\alpha_{-m}(0),\beta_{-m}(0))=-a^2.
\end{align}
From \eqref{eq:325} we deduce
\begin{align}
  \label{eq:327}
  |\Delta_{m+1}\beta_+''|&\leq (a^2+Cv)|\Delta_m\alpha_+''|\nonumber\\
                         &\qquad+C\Big(|\Delta_m\alpha_+|+|\Delta_m\alpha_-|+|\Delta_m\beta_-|+|\Delta_m\alpha_+'|+|\Delta_m\alpha_-'|+|\Delta_m\beta_-'|\nonumber\\
  &\hspace{80mm}+|\Delta_m\alpha_-''|+|\Delta_m\beta_-''|\Big).
\end{align}
Using \eqref{eq:255}, \eqref{eq:292}, \eqref{eq:293}, \eqref{eq:294} we obtain
\begin{align}
  \label{eq:328}
  |\Delta_{m+1}\beta_+''|&\leq (a^4+Cv)\|\Delta_m\beta_{+}\|_1+C\|\Delta_mx\|_2.
\end{align}
Choosing $\varepsilon$ sufficiently small and taking the supremum in $[0,\varepsilon]$, we have (see \eqref{eq:323} for the constant $k$)
\begin{align}
  \label{eq:329}
  \|\Delta_{m+1}\beta_{+}\|_1&\leq k\|\Delta_m\beta_{+}\|_1+C\|\Delta_mx\|_2.
\end{align}

In obvious notation, the estimates \eqref{eq:324}, \eqref{eq:329} can be written as
\begin{align}
  \label{eq:330}
  \begin{pmatrix}
    \|\Delta_{m+1}\beta_{+}\|_1 \\ \|\Delta_{m+1}x\|_2
  \end{pmatrix}\leq
A                              \begin{pmatrix}
                                \|\Delta_{m}\beta_{+}\|_1 \\ \|\Delta_{m}x\|_2
                              \end{pmatrix},
\end{align}
where $A$ is the matrix
\begin{align}
  \label{eq:331}
      A=\begin{pmatrix}
      k & C \\ C\varepsilon & k
    \end{pmatrix}.
\end{align}
The eigenvalues of $A$ are
\begin{align}
  \label{eq:332}
  \lambda_\pm=k\pm C\sqrt{\varepsilon}.
\end{align}
Choosing $\varepsilon$ sufficiently small we have $|\lambda_\pm|< 1$. It follows that the sequence
  \begin{align}
\label{eq:333}
    ((\beta_{+m},x_m);m=0,1,2,\ldots)   
  \end{align}
converges in $B_B\times B_{N_0}$.
\end{proof}

\subsection{Proof of the Existence Theorem}
The two propositions above show that the sequence $((\beta_{+m},x_m);m=0,1,2,\ldots)$ converges uniformly in $[0,\varepsilon]\times T_\varepsilon$ to $(\beta_+,x)\in B_B\times B_{N_0}$. From $\beta_m(u,v)=\beta_{+m}(v)$, $\alpha_m(u,v)=\beta_{+m}(u)$ it follows that $\alpha_m$, $\beta_m$ converge uniformly in $T_\varepsilon$ and the limits $\alpha$, $\beta$ satisfy
\begin{align}
  \label{eq:334}
  \pp{\alpha}{v}=0=\pp{\beta}{u}.
\end{align}
The convergence of $\alpha_m$, $\beta_m$ implies the convergence of $\cin_m$, $\cout_m$ to $\cin$, $\cout$, given by\footnote{We recall that in the proofs of the above propositions, the notation
  \begin{align}
    \label{eq:335}
    \cin_m(u,v)=\cin(\alpha_m(u,v),\beta_m(u,v))
  \end{align}
was used. The notation $\cin(u,v)$, $\cout(u,v)$ (without an index and the arguments are $u$, $v$) is introduced as the limit of $\cin_m(u,v)$, $\cout_m(u,v)$.}
\begin{align}
  \label{eq:336}
  \cin(u,v)=\cin(\alpha(u,v),\beta(u,v)),\qquad \cout(u,v)=\cout(\alpha(u,v),\beta(u,v)),
\end{align}
which in turn implies the convergence of $\mu_m$, $\nu_m$ to $\mu$, $\nu$, given by
\begin{align}
  \label{eq:337}
  \mu=\frac{1}{\cout-\cin}\frac{\cout}{\cin}\pp{\cin}{v},\qquad \nu=\frac{1}{\cout-\cin}\frac{\cin}{\cout}\pp{\cout}{u}.
\end{align}
This together with the convergence of $x_m$ to $x$ implies, in view of \eqref{eq:312}, the convergence of $M_m$ to $M$ which is given by
\begin{align}
  \label{eq:338}
  M=\mu\pp{x}{u}+\nu\pp{x}{v}.
\end{align}
From \eqref{eq:315} we see that the limits $x$ and $M$ satisfy
\begin{align}
  \label{eq:339}
  \pppp{x}{u}{v}(u,v)=M(u,v).
\end{align}

The convergence of $\alpha_m$, $\beta_m$ to $\alpha$, $\beta$ implies the convergence of $g_m$, $h_m$ to $g$, $h$ (see \eqref{eq:152}), given by
\begin{align}
  \label{eq:340}
  g(u,v)=\frac{1}{\cin(u,v)},\qquad h(u,v)=\frac{1}{\cout(u,v)},
\end{align}
which, together with the convergence of $x_m$ to $x$, in view of \eqref{eq:265}, implies the convergence of $\phi_m$, $\psi_m$ to $\phi$, $\psi$ given by
\begin{align}
  \label{eq:341}
  \phi(u,v)&=\frac{1}{\cin(u,v)}\pp{x}{u}(u,v),\\
  \label{eq:342}
  \psi(u,v)&=\frac{1}{\cout(u,v)}\pp{x}{v}(u,v).
\end{align}

From \eqref{eq:149} we have that $t_m$ converges to $t$ given by
\begin{align}
  \label{eq:343}
  t(u,v)=\int_0^u\left(\phi+\psi\right)(u',u')du'+\int_u^v\psi(u,v')dv'.
\end{align}
Hence,
\begin{align}
  \label{eq:344}
  \pp{t}{v}(u,v)=\psi(u,v)=\frac{1}{\cout(u,v)}\pp{x}{v}(u,v)
\end{align}
and 
\begin{align}
  \label{eq:345}
  \pp{t}{u}(u,v)=\phi(u,u)+\int_u^v\pp{\psi}{u}(u,v')dv'.
\end{align}
It follows by a direct computation that \eqref{eq:339} (using also \eqref{eq:337}, \eqref{eq:338}) is equivalent to
\begin{align}
  \label{eq:346}
  \pp{\psi}{u}(u,v)=\pp{\phi}{v}(u,v).
\end{align}
Using this in \eqref{eq:345} we obtain
\begin{align}
  \label{eq:347}
  \pp{t}{u}(u,v)&=\phi(u,u)+\int_u^v\pp{\phi}{v}(u,v')dv'\nonumber\\
                &=\phi(u,u)+\phi(u,v)-\phi(u,u)\nonumber\\
                &=\phi(u,v)\nonumber\\
                &=\frac{1}{\cin(u,v)}\pp{x}{u}(u,v).
\end{align}
Equations \eqref{eq:344}, \eqref{eq:347} are the characteristic equations, i.e.~the functions $x(u,v)$, $t(u,v)$ satisfy the characteristic equations in $T_\varepsilon$.

From $x_m(u,u)=0$, $\alpha_m(u,u)=\beta_m(u,u)$ we have
\begin{align}
  \label{eq:348}
  x(u,u)=0,\qquad \alpha(u,u)=\beta(u,u),
\end{align}
i.e.~the boundary conditions on the wall are satisfied in the limit.

The convergence of $\alpha_m$, $\beta_m$ to $\alpha$, $\beta$ implies the convergence of the boundary functions $\alpha_{+m}$, $\beta_{+m}$ to $\alpha_+$, $\beta_+$, given by
\begin{align}
  \label{eq:349}
  \alpha_+(v)=\alpha(av,v),\qquad \beta_+(v)=\beta_+(av,v).
\end{align}
The convergence of $t_m$, $x_m$ to $t$, $x$ implies the convergence of the boundary functions $t_{+m}$, $x_{+m}$ to $t_+$, $x_+$, given by
\begin{align}
  \label{eq:350}
  x_+(v)=x(av,v),\qquad t_+=t(av,v).
\end{align}
This in turn implies the convergence of $\alpha_{-m}$, $\beta_{-m}$ to $\alpha_-$, $\beta_-$ (see \eqref{eq:195}), given by
\begin{align}
  \label{eq:351}
  \alpha_-(v)=\alpha^\ast(t_+(v),x_+(v)),\qquad\beta_-(v)=\beta^\ast(t_+(v),x_+(v)).
\end{align}
The convergence of the boundary functions then implies the convergence of $V_m$ to $V$, given by
\begin{align}
  \label{eq:352}
  V(v)=\frac{\jump{\rho(v)w(v)}}{\jump{\rho(v)}},
\end{align}
where
\begin{align}
  \label{eq:353}
  \rho_\pm(v)=\rho(\alpha_\pm(v),\beta_\pm(v)),\qquad w_\pm(v)=w(\alpha_\pm(v),\beta_\pm(v)).
\end{align}
This implies the convergence of $\Gamma_m$ to $\Gamma$, given by
\begin{align}
  \label{eq:354}
  \Gamma(v)=a\frac{\cout_+(v)}{\cin_+(v)}\,\frac{V(v)-\cin_+(v)}{\cout_+(v)-V(v)}.
\end{align}
The convergence of $x_m$, $\Gamma_m$ and $M_m$ imply the convergence of $\Lambda_m$ to $\Lambda$ (see \eqref{eq:222}), given by
\begin{align}
  \label{eq:355}
  \Lambda(u)&=\Gamma(u)a\ppp{x}{u}(au,u)+\Gamma'(u)\pp{x}{u}(au,u)+\Gamma(u)M(au,u)\nonumber\\
            &=\Gamma(u)a\ppp{x}{u}(au,u)+\Gamma'(u)\pp{x}{u}(au,u)+\Gamma(u)\pppp{x}{u}{v}(au,u)\nonumber\\
            &=\pp{}{u}\left(\Gamma(u)\pp{x}{u}(au,u)\right).
\end{align}
The limiting equation of \eqref{eq:224} with $u=av$ is
\begin{align}
  \label{eq:356}
  \pp{x}{v}(av,v)&=1+\Gamma(v)\pp{x}{u}(av,v)-\Gamma(0)\pp{x}{u}(0,0)\nonumber\\
                 &=\Gamma(v)\pp{x}{u}(av,v),
\end{align}
where for the second equality we used $\Gamma(0)=-1$, $\pp{x}{u}(0,0)=-1$. Using the characteristic equations \eqref{eq:344}, \eqref{eq:347}, the boundary condition \eqref{eq:356} is equivalent to (see computations between \eqref{eq:27} and \eqref{eq:37})
\begin{align}
  \label{eq:357}
  \frac{dx_+}{dv}(v)=V(v)\frac{dt_+}{dv}(v),
\end{align}
i.e.~the boundary condition on the shock is satisfied in the limit.

Equation \eqref{eq:239} is satisfied in the limit, i.e.
\begin{align}
  \label{eq:358}
  \beta_+(v)=H\Big(\alpha_+(v),\alpha_-(v),\beta_-(v)\Big).
\end{align}
This implies (see \eqref{eq:55})
\begin{align}
  \label{eq:359}
  J(\alpha_+,\beta_+,\alpha_-,\beta_-)=0,
\end{align}
i.e.~the other jump condition along the shock is also satisfied in the limit.

We recall that the determinism condition is satisfied at the reflection point by assumption on the data (see \eqref{eq:15}):
\begin{align}
  \label{eq:360}
    (\cout_0^\ast)_0<V_0<\eta_0,
\end{align}
where
\begin{align}
  \label{eq:361}
  (\cout_0^\ast)_0&=\cout_-(0)=\cout(\alpha_-(0),\beta_-(0)),\\
  \label{eq:362}
  \eta_0&=\cout_+(0)=\cout(\alpha_+(0),\beta_+(0)).
\end{align}
Therefore, choosing $\varepsilon$ sufficiently small, the determinsim condition is satisfied for $v\in[0,\varepsilon]$, i.e.
\begin{align}
  \label{eq:363}
  \cout_-(v)<V(v)<\cout_+(v).
\end{align}

In view of \eqref{eq:334}, \eqref{eq:344}, \eqref{eq:347}, \eqref{eq:352}, \eqref{eq:359}, \eqref{eq:363} we have proven the existence of a twice differentiable solution $(\alpha,\beta,t,x)$ to the shock reflection problem as stated in subsection \ref{shock_reflection_problem}.

The determinant of the Jacobian $\frac{\partial(t,x)}{\partial(u,v)}$ is given by
\begin{align}
  \label{eq:364}
  \begin{vmatrix}
    \pp{t}{u} & \pp{t}{v}\\
    \pp{x}{u} & \pp{x}{v}
  \end{vmatrix}=(\cout-\cin)\pp{t}{u}\pp{t}{v}=2\eta\pp{t}{u}\pp{t}{v}=\frac{2}{\eta_0}+\Landau(v).
\end{align}
Therefore, by choosing $\varepsilon$ sufficiently small, the functions $\alpha$, $\beta$ are twice differentiable functions of $(t,x)$ on the image of $T_\varepsilon$ by the map $(u,v)\mapsto(t(u,v),x(u,v))$.

This concludes the proof of theorem \ref{existence_theorem} from page \pageref{existence_theorem}.

\subsection{Asymptotic Form}
By straightforward expansions of the functions $(\alpha,\beta,t,x)$ at the reflection point in the state behind, one can show that any twice differentiable solution of the reflection problem is of the same asymptotic form as the solution constructed in the existence proof:
\begin{align*}
  \alpha(u,v)&=\beta_0+\beta_0'u+\Landau(v^2),\\
  \beta(u,v)&=\beta_0+\beta_0'v+\Landau(v^2),\\
  t(u,v)&=\frac{1}{\eta_0}(u+v)+\Landau(v^2),\\
  x(u,v)&=v-u+\Landau(v^2).
\end{align*}

\subsection{Uniqueness}
We have the following uniquenss result:
\begin{theorem}[Uniqueness]
  Let $(\alpha_1,\beta_1,t_1,x_1)$, $(\alpha_2,\beta_2,t_2,x_2)$, both in $C^2(T_\varepsilon)$, be two solutions of the reflection problem as stated in \ref{shock_reflection_problem} corresponding to the same future development of the data. Then, for $\varepsilon$ sufficiently small, the two solutions coincide.
\end{theorem}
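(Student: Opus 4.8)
The plan is to reuse the difference estimates from the proof of the convergence proposition, now comparing the two given solutions rather than two consecutive iterates. The first step is to observe that any $C^2$ solution $(\alpha,\beta,t,x)$ of the reflection problem is a fixed point of the iteration map of Section~5.2: from $\partial_v\alpha=\partial_u\beta=0$ and the wall condition one may write $\beta_+(v)=\beta(av,v)$, $\beta(u,v)=\beta_+(v)$, $\alpha(u,v)=\beta_+(u)$, and then integrating the characteristic equations \eqref{eq:81}--\eqref{eq:84} along characteristics and using the wall and shock boundary conditions \eqref{eq:85}, \eqref{eq:86} and the jump condition \eqref{eq:92} reproduces the identities \eqref{eq:220}, \eqref{eq:343} and \eqref{eq:239} with both indices $m$, $m+1$ replaced by the solution itself (this is precisely the computation \eqref{eq:73}--\eqref{eq:77}, \eqref{eq:56}). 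Moreover, by the expansion at the reflection point recorded in the Asymptotic Form subsection, every solution satisfies $x(0,0)=0$, $-\partial_u x(0,0)=1=\partial_v x(0,0)$, $\beta_+(0)=\beta_0$ and $\beta_+'(0)=\beta_0'$, and stays within a small ball around the reference state for $\varepsilon$ small, so that $H$ and the other smooth coefficient functions are applicable along it. Enlarging the radii $B$, $N_0$ if necessary so that both solutions lie in $B_B\times B_{N_0}$ (their norms on $T_\varepsilon$ being finite), we thus have two fixed points of the iteration map in $B_B\times B_{N_0}$.

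The second step is to run the chain of estimates \eqref{eq:254}--\eqref{eq:329} verbatim, with $\Delta_{m+1}(\cdot)$ and $\Delta_m(\cdot)$ both replaced by the difference $(\cdot)_1-(\cdot)_2$ of the two solutions. Each of these estimates uses only membership in $B_B\times B_{N_0}$, the normalisation at the origin, and the integral identities, all of which are available here. Writing $\delta\beta_+=\beta_{+1}-\beta_{+2}$ and $\delta x=x_1-x_2$, this yields exactly \eqref{eq:324}, \eqref{eq:329}, hence
\begin{align*}
  \begin{pmatrix}\|\delta\beta_+\|_1\\ \|\delta x\|_2\end{pmatrix}\le A\begin{pmatrix}\|\delta\beta_+\|_1\\ \|\delta x\|_2\end{pmatrix},\qquad A=\begin{pmatrix}k & C\\ C\varepsilon & k\end{pmatrix},\quad 0<a<k<1,
\end{align*}
with $A$ as in \eqref{eq:331}. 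For $\varepsilon$ small the eigenvalues $k\pm C\sqrt{\varepsilon}$ of $A$ lie in $(-1,1)$; since the entries of $A$ are nonnegative, iterating the inequality gives $(\|\delta\beta_+\|_1,\|\delta x\|_2)^{T}\le A^{n}(\cdot)^{T}\to 0$, so that $\delta\beta_+\equiv 0$ and $\delta x\equiv 0$ on $T_\varepsilon$. Since $\alpha,\beta,t$ are determined by the pair $(\beta_+,x)$ through \eqref{eq:97} and \eqref{eq:343}, the two solutions coincide.

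If one wishes to deduce uniqueness on a fixed domain $T_{\varepsilon_0}$ without shrinking it, I would append a connectedness argument in the size parameter: the set $E=\{\sigma\le\varepsilon_0:(\alpha_1,\beta_1,t_1,x_1)=(\alpha_2,\beta_2,t_2,x_2)\ \text{on}\ T_\sigma\}$ contains a neighbourhood of $0$ by the step above, is closed by continuity, and is open because, given agreement on $T_\sigma$, the third-order derivatives entering $\|\cdot\|_2$ agree on $T_\sigma$ up to its closure, so that in the integral identities for $x$ and $\beta_+$ at a point of $T_{\sigma+\delta}$ all contributions coming from $T_\sigma$ cancel (the $u\mapsto au$ reduction in \eqref{eq:229} lands in $T_\sigma$ once $\delta$ is small), leaving only integrals over the thin strip $\{\sigma\le v\le\sigma+\delta\}$ and producing a contraction with constant $\Landau(\delta)$.

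The main obstacle is hidden in the first step: an arbitrary $C^2$ solution carries no a priori bound on its norm, so the constants $C$, $k$ in the difference estimates, and hence the admissible size of $\varepsilon$, depend on the two solutions being compared. This is harmless, since one fixes the solutions first and only then chooses $\varepsilon$, but it means the clean formulation is that for every bound on the $C^2$-norms there is a corresponding threshold $\varepsilon^\ast$. A secondary point to check carefully is that the third derivatives occurring inside the integrals in \eqref{eq:143}, \eqref{eq:220} are continuous up to $\partial T_\varepsilon$ — which is part of the meaning of $\|\cdot\|_2<\infty$ — so that equality on the interior of a sub-triangle propagates to its closure, as used in the connectedness step.
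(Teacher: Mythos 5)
Your proposal is correct and follows exactly the route the paper indicates: it applies the difference estimates of the convergence proof to the two solutions (both fixed points of the iteration map, normalised at the origin by \eqref{eq:20} and \eqref{eq:93}), arriving at the matrix inequality with spectral radius below one, which forces the difference to vanish. The paper only remarks that the proof is "straightforward using similar estimates as in the convergence proof," and your write-up supplies precisely those details, including the correct caveat that the threshold on $\varepsilon$ depends on the a priori $C^2$-bounds of the two given solutions.
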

Using similar estimates as in the convergence proof above, the proof is straightforward.

\section{Higher Regularity}
\begin{theorem}[Higher Regularity]
  For $\varepsilon$ sufficiently small, the established solution $\alpha$, $\beta$, $t$, $x$ of the reflection problem is infinitely differentiable.
\end{theorem}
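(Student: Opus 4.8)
The plan is a regularity bootstrap: starting from the $C^2$ solution furnished by Theorem \ref{existence_theorem}, one shows by induction on $k\geq 2$ that the solution $(\alpha,\beta,t,x)$ lies in $C^k(T_\varepsilon)$, which yields $C^\infty$. Two reductions organise the inductive step $C^k\Rightarrow C^{k+1}$. First, since $\pp{\alpha}{v}=0=\pp{\beta}{u}$ and $\alpha=\beta$ along the wall, there is a single function $b=\beta_+\in C^k[0,\varepsilon]$ with $\alpha(u,v)=b(u)$ and $\beta(u,v)=b(v)$; hence $\cin,\cout,\mu,\nu$ and the shock data $\alpha_\pm,\beta_\pm$ are all controlled by $b$, $x$, $t$, and it suffices to upgrade the pair $(b,x)$. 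Second, $t$ costs nothing extra: the characteristic equations \eqref{eq:344}, \eqref{eq:347} give $\pp{t}{u}=\pp{x}{u}/\cin$ and $\pp{t}{v}=\pp{x}{v}/\cout$ with $\cin,\cout$ smooth and nonvanishing functions of $(\alpha,\beta)=(b(u),b(v))$, so $x\in C^{k+1}$ together with $b\in C^k$ already forces $t\in C^{k+1}$ (and $t\in C^k$ under the induction hypothesis alone, which is all that Step A needs).

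\emph{Step A (the map $x$).} Assuming the induction hypothesis I would show $x\in C^{k+1}(T_\varepsilon)$. The representations \eqref{eq:75}, \eqref{eq:77} express $\pp{x}{u}$, $\pp{x}{v}$ through boundary terms $\Gamma\pp{x}{u}$ evaluated on the shock $u=av$ plus integrals of $\pppp{x}{u}{v}$ along characteristics, and \eqref{eq:80} reads $\pppp{x}{u}{v}=\mu\pp{x}{u}+\nu\pp{x}{v}$, where $\mu,\nu\in C^{k-1}$ (they carry one derivative of $b$) and $\Gamma\in C^{k}$ (a smooth function of $\alpha_\pm,\beta_\pm\in C^k$). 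Differentiating these identities $k-1$ times and repeatedly substituting $\pppp{x}{u}{v}=M$ to trade every mixed derivative for pure ones, one is led -- exactly in the spirit of \eqref{eq:227}--\eqref{eq:230} -- to a closed linear fixed-point equation $Z=\mathcal{L}_\varepsilon[Z]+R$ for the vector $Z$ of all order-$(k+1)$ partial derivatives of $x$. Here $R$ is continuous, being built from $\Gamma$, $\mu$, $\nu$ and derivatives of $x$ of order $\leq k$, all continuous by hypothesis; and $\mathcal{L}_\varepsilon$ combines (i) evaluation on the shock, carrying the prefactor $a^{j-1}\Gamma$ on the $j$-th pure derivative, with (ii) Volterra integrals along characteristic paths of length $\Landau(\varepsilon)$. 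Since $|\Gamma(0)|=1$, $0<a<1$, and the paths are short, $\mathcal{L}_\varepsilon$ is a contraction on $C^0(T_\varepsilon)$ for $\varepsilon$ small -- precisely the mechanism behind the matrix $A$ of \eqref{eq:331}. Thus $Z$ exists and is continuous; identifying $Z$ with the genuine order-$(k+1)$ derivatives of $x$ (by a compactness argument on difference quotients of the order-$k$ derivatives, using the same uniform bounds) gives $x\in C^{k+1}$, hence also $t\in C^{k+1}$.

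\emph{Step B (the profile $b$).} Now $t_+(v)=t(av,v)$ and $x_+(v)=x(av,v)$ are $C^{k+1}$, so, $\alpha^\ast,\beta^\ast$ being smooth, $\alpha_-(v)=\alpha^\ast(t_+(v),x_+(v))$ and $\beta_-(v)=\beta^\ast(t_+(v),x_+(v))$ are $C^{k+1}$. The shock compatibility relation \eqref{eq:56} reads $b(v)=H\bigl(b(av),\alpha_-(v),\beta_-(v)\bigr)$ with $H$ smooth. Differentiating $k$ times (legitimate since $b\in C^k$) isolates the top-order part as
\begin{align*}
  b^{(k)}(v)=a^{k}\,\pp{H}{\alpha_+}\bigl(b(av),\alpha_-(v),\beta_-(v)\bigr)\,b^{(k)}(av)+R_k(v),
\end{align*}
where $R_k$ is assembled from $b^{(\leq k-1)}$ and from $\alpha_-,\beta_-$ and their derivatives of order $\leq k$, hence $R_k\in C^1$. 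Because $\pp{H}{\alpha_+}$ at the origin equals $F_0=-a^2$ (see \eqref{eq:58}, \eqref{eq:61}), the prefactor has absolute value $a^{k+2}+\Landau(\varepsilon)<1$; iterating $v\mapsto av\mapsto a^2v\mapsto\cdots\to 0$ solves for $b^{(k)}$ as a series whose termwise derivative again converges uniformly (each iterate gaining a geometric factor), so $b^{(k)}\in C^1$, i.e. $b\in C^{k+1}$. With $\alpha=\beta=b$, Step A and the remark on $t$, the full solution is $C^{k+1}$, closing the induction.

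The main work is the bookkeeping in Step A: verifying that differentiating \eqref{eq:75}, \eqref{eq:77}, \eqref{eq:80} to top order really leaves the order-$(k+1)$ derivatives appearing only through the small-coefficient shock term and short Volterra integrals -- that the linearised problem for $Z$ is triangular with contractive self-coupling -- and justifying the formal differentiations. This is not deep; it reproduces, one derivative higher, the estimates already carried out in the existence proof, and the essential ingredients $0<a<1$ and $|\Gamma(0)|=|F_0^{1/2}|<1$ are exactly the ones exploited there. Alternatively one could rerun the iteration scheme of Section 5 in the higher-order norms obtained by adjoining $\sup_{T_\varepsilon}|D^jf|$, $j\leq k$, to \eqref{eq:120}, \eqref{eq:121}; the iteration map still contracts, producing a $C^k$ solution which, by the uniqueness theorem, is the one already constructed.
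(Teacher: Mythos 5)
Your overall strategy coincides with the paper's: bootstrap on the order of differentiation, reduce everything to the pair $(x,\beta_+)$ via $\alpha(u,v)=\beta_+(u)$, $\beta(u,v)=\beta_+(v)$ and the characteristic equations for $t$, and close the top-order estimates using $0<a<1$, $\Gamma(0)=-1$, $F_0=-a^2$ and short Volterra integrals. However, there is a genuine gap in the claimed decoupling of Step A from Step B. The coefficient $\mu$ in \eqref{eq:79} contains $\pp{\cin}{v}$, i.e.\ one derivative of $\beta_+$; consequently $\partial_v^{k}M$ contains $\partial_v^{k}\mu\cdot\pp{x}{u}$ and hence $\beta_+^{(k+1)}$ --- a top-order derivative of the boundary profile. (This is exactly what the paper records in \eqref{eq:388}--\eqref{eq:390} for $k=2$: $\ppp{M}{v}$ contains $\frac{\partial^3\beta}{\partial v^3}$.) Since obtaining $\partial_v^{k+1}x$ from \eqref{eq:77} forces you to differentiate the integral $\int_{av}^uM\,du'$ a full $k$ times in $v$, the remainder $R$ in your fixed-point equation $Z=\mathcal{L}_\varepsilon[Z]+R$ is not ``built from derivatives of order $\leq k$''; it contains $\beta_+^{(k+1)}$, which at that stage is not yet known to exist or to be bounded. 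Conversely, your Step B needs $t_+,x_+\in C^{k+1}$, i.e.\ the conclusion of Step A. As written, the two steps are circular.

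The repair is to treat the top-order derivatives of $x$ and of $\beta_+$ as a single coupled system of a priori sup-norm inequalities, which is what the paper does: it first derives $\sup|\beta_+'''|\leq C+\overline C_1\sup\left|\frac{\partial^3x}{\partial u^3}\right|+\overline C_2\sup\left|\frac{\partial^3x}{\partial v^3}\right|$ (eq.\ \eqref{eq:379}) without assuming either side finite in advance, then feeds this into the Gronwall-processed estimates \eqref{eq:392}--\eqref{eq:397} for $x$, and closes the loop in \eqref{eq:398}--\eqref{eq:402} using the contraction factor $a+\overline C\varepsilon<1$. Your contraction mechanisms are the right ones, but they must act on the joint vector $\left(\sup\left|\partial^{k+1}x\right|,\sup\left|\beta_+^{(k+1)}\right|\right)$ --- in the spirit of the matrix \eqref{eq:331} you yourself invoke --- rather than on each component separately. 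A secondary remark: the paper establishes boundedness of the higher derivatives and notes that the $\varepsilon$-smallness conditions are order-independent (the $\overline C$ constants); your concern about justifying the formal differentiations, e.g.\ by difference quotients or by rerunning the iteration in higher-order norms, is legitimate and would be needed in either version, but it is secondary to fixing the coupling issue above.
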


\begin{proof}
We show that all derivatives of the functions $\alpha$, $\beta$, $t$, $x$ are bounded. This can be done by induction, i.e.~once it is assumed that the $n$'th order derivatives are bounded it can be shown that the $n+1$'th order derivatives are bounded. The base case of this induction is given by the solution being in $C^2(T_{\varepsilon})$, i.e.~it is already shown that the solution is two times continuously differentiable. Instead of showing the inductive step $n\mapsto n+1$ we only show that the third order derivatives are bounded, the general inductive step being completely analogous. We restrict to this case in order to simplify and shorten the presentation of the argument. However, it is important to note that the encountered smallness conditions on $\varepsilon$ in the process are independent of the order of derivatives studied, i.e.~also for the general inductive step, no other smallness conditions would be necessary. This will become apparent during the argument to follow.

Through the characteristic equations \eqref{eq:19}, the derivatives of $t(u,v)$ can be expressed in terms of the derivatives of $x(u,v)$ and functions of $\alpha$, $\beta$. Therefore, the bounds on the derivatives of $t(u,v)$ will follow from the bounds on the derivatives of $x(u,v)$. From this together with
\begin{align}
  \label{eq:365}
  \beta(u,v)=\beta_+(v),\qquad \alpha(u,v)=\beta_+(u),  
\end{align}
we see that it suffices to establish bounds on the derivatives of the functions $x(u,v)$, $\beta_+(v)$.

We first consider the function $\beta_+(v)$. We recall \eqref{eq:56}:
  \begin{align}
    \label{eq:366}
    \beta_+(v)=H(\alpha_+(v),\alpha_-(v),\beta_-(v)).    
  \end{align}
Here $H$ is a smooth function of its arguments and
  \begin{align}
    \label{eq:367}
  \alpha_+(v)=\alpha(av,v),\qquad \alpha_-(v)=\alpha^\ast(t_+(v),x_+(v)),\qquad\beta_-(v)=\beta^\ast(t_+(v),x_+(v)),    
  \end{align}
where
\begin{align}
      \label{eq:368}
  t_+(v)=t(av,v),\qquad x_+(v)=x(av,v).
\end{align}
Taking the third derivative of \eqref{eq:366} we obtain
\begin{align}
  \label{eq:369}
  \beta_+'''(v)&=F\Big(\alpha_+(v),\alpha_-(v),\beta_-(v)\Big)\alpha_+'''(v)\nonumber\\
             &\quad+M_1\Big(\alpha_+(v),\alpha_-(v),\beta_-(v)\Big)\beta_-'''(v)\nonumber\\
             &\quad+M_2\Big(\alpha_+(v),\alpha_-(v),\beta_-(v)\Big)\alpha_-'''(v)+\lot.
\end{align}
Here and in the following we denote by $\lot$ terms that are of lower order. Since we are studying the third order derivatives this means we denote by $\lot$ terms which can involve the functions $\alpha$, $\beta$, $t$, $x$, derivatives thereof and second derivatives thereof. Terms of lower order are bounded in absolute value. The functions $F$, $M_1$, $M_2$ correspond to the partial derivatives of $H$ and are therefore smooth functions of their arguments. They already show up in \eqref{eq:57}.

Taking the third derivative of $\alpha_-(v)=\alpha^\ast(t_+(v),x_+(v))$, we obtain
\begin{align}
  \label{eq:370}
  \alpha_-'''(v)=\pp{\alpha^\ast}{t}(t_+(v),x_+(v))t_+'''(v)+\pp{\alpha^\ast}{x}(t_+(v),x_+(v))x_+'''(v)+\lot.
\end{align}
The third derivative of $x_+(v)=x(av,v)$ is given by
\begin{align}
  \label{eq:371}
  x_+'''(v)=a^3\frac{\partial^3x}{\partial u^3}(av,v)+3a^2\frac{\partial^3x}{\partial u^2\partial v}(av,v)+3a\frac{\partial^3x}{\partial u\partial v^2}(av,v)+\frac{\partial^3x}{\partial v^3}(av,v).
\end{align}
The same equation holds with $t$ in the role of $x$. In view of \eqref{eq:80}, the partial derivatives of mixed type can be expressed by lower order derivatives, hence
\begin{align}
  \label{eq:372}
  x_+'''(v)=a^3\frac{\partial^3x}{\partial u^3}(av,v)+\frac{\partial^3x}{\partial v^3}(av,v)+\lot.
\end{align}
In view of the characteristic equations \eqref{eq:19} we have
\begin{align}
  \label{eq:373}
  \frac{\partial^3t}{\partial u^3}(u,v)&=\frac{1}{\cin(u,v)}\frac{\partial^3x}{\partial u^3}(u,v)+\lot,\\
  \label{eq:374}
  \frac{\partial^3t}{\partial v^3}(u,v)&=\frac{1}{\cout(u,v)}\frac{\partial^3x}{\partial u^3}(u,v)+\lot,
\end{align}
where we use the short notation $\cin(u,v)=\cin(\alpha(u,v),\beta(u,v))$, $\cout(u,v)=\cout(\alpha(u,v),\beta(u,v))$. The partial derivatives of $t(u,v)$ of mixed type can be expressed by partial derivatives of $x(u,v)$ of mixed type. Therefore,
\begin{align}
  \label{eq:375}
  t_+'''(v)=\frac{a^3}{\cin(av,v)}\frac{\partial^3x}{\partial u^3}(av,v)+\frac{1}{\cout(av,v)}\frac{\partial^3x}{\partial v^3}(av,v)+\lot.
\end{align}
Substituting \eqref{eq:372}, \eqref{eq:375} into \eqref{eq:370} we obtain
\begin{align}
  \label{eq:376}
  |\alpha_-'''(v)|,|\beta_-'''(v)|\leq C+\overline{C}_1\sup_{[0,\varepsilon]}\left|\frac{\partial^3x}{\partial u^3}(av,v)\right|+\overline{C}_2\sup_{[0,\varepsilon]}\left|\frac{\partial^3x}{\partial v^3}(av,v)\right|,
\end{align}
where the statement for $\beta_-'''(v)$ is obtained analogously. Here and in the following we denote by $C$ and $\overline{C}$ (possibly with an index) generic numerical constants whose values may change from line to line, in agreement with standard notation. However, by $\overline{C}$ we denote numerical constants whose values are the same no matter what order of derivative is studied. For example if we would estimate the fourth order derivatives (assuming third order derivatives are bounded) equation \eqref{eq:376} would look the same, just with fourth order derivatives instead of third order. But the constant $C$ would possess a different numerical value while the constants $\overline{C}_1$, $\overline{C}_2$ would possess the same numerical values as in \eqref{eq:376}.

For the function $F$ in \eqref{eq:369} we have the estimate (recall \eqref{eq:61})
\begin{align}
  \label{eq:377}
  \left|F\Big(\alpha_+(v),\alpha_-(v),\beta_-(v)\Big)\right|\leq a^2+\overline{C}v.
\end{align}
Using this together with \eqref{eq:376}  and $\alpha_+(v)=\beta_+(av)$ in \eqref{eq:369} we obtain
\begin{align}
  \label{eq:378}
  |\beta_+'''(v)|\leq C+(a^2+\overline{C}_1v)a^3|\beta_+'''(av)|+\overline{C}_2\sup_{[0,\varepsilon]}\left|\frac{\partial^3x}{\partial u^3}(av,v)\right|+\overline{C}_3\sup_{[0,\varepsilon]}\left|\frac{\partial^3x}{\partial v^3}(av,v)\right|,
\end{align}
which, by choosing $\varepsilon$ sufficiently small (recall that $a<1$), implies
\begin{align}
  \label{eq:379}
  \sup_{[0,\varepsilon]}|\beta_+'''(v)|\leq C+\overline{C}_1\sup_{[0,\varepsilon]}\left|\frac{\partial^3x}{\partial u^3}(av,v)\right|+\overline{C}_2\sup_{[0,\varepsilon]}\left|\frac{\partial^3x}{\partial v^3}(av,v)\right|.
\end{align}

We now consider the function $x(u,v)$. We recall \eqref{eq:77}:
\begin{align}
    \label{eq:380}
  \pp{x}{v}(u,v)=\Gamma(v)\pp{x}{u}(av,v)+\int_{av}^uM(u',v)du',
\end{align}
where (see \eqref{eq:36})
\begin{align}
    \label{eq:381}
    \Gamma(v)= a\frac{\cout_+(v)}{\cin_+(v)}\,\frac{V(v)-\cin_+(v)}{\cout_+(v)-V(v)}
\end{align}
and $V(v)$ satisfies
\begin{align}
  \label{eq:382}
  V(v)=\frac{\jump{\rho(v) w(v)}}{\jump{\rho(v)}},
\end{align}
where $\rho_+(v)=\rho(\alpha_+(v),\beta_+(v))$, $\rho_-(v)=\rho(\alpha_-(v),\beta_-(v))$ and analogous for $w_\pm(v)$. We see that $\Gamma(v)$ is a function of $\alpha$, $\beta$, $t$, $x$, hence,
\begin{align}
  \label{eq:383}
  \Gamma(v),\Gamma'(v), \Gamma''(v)=\lot.
\end{align}
We recall that
\begin{align}
  \label{eq:384}
    M=\pppp{x}{u}{v}=\mu\pp{x}{u}+\nu\pp{x}{v},
\end{align}
where
\begin{align}
  \label{eq:385}
    \mu= \frac{1}{\cout-\cin}\,\frac{\cout}{\cin}\pp{\cin}{v},\qquad \nu=-\frac{1}{\cout-\cin}\,\frac{\cin}{\cout}\pp{\cout}{u}.
\end{align}
We observe that $\mu$, $\nu$ involve first derivatives of $\beta_+(v)$. Therefore,
\begin{align}
  \label{eq:386}
  M(u,v),\pp{M}{v}(u,v)=\lot.
\end{align}
Using \eqref{eq:383}, \eqref{eq:386} and taking two derivatives with respect to $v$ of \eqref{eq:380} we obtain
\begin{align}
  \label{eq:387}
  \frac{\partial^3x}{\partial v^3}(u,v)=\Gamma(v)a^2\frac{\partial^3x}{\partial u^3}(av,v)+\int_{av}^u\ppp{M}{v}(u',v)du'+\lot.
\end{align}
Because of
\begin{align}
  \label{eq:388}
  \frac{\partial^3\cin}{\partial v^3}=\pp{\cin}{\beta}\frac{\partial^3\beta}{\partial v^3}+\lot,\qquad \frac{\partial^3\cout}{\partial u\partial v^2}=\lot,
\end{align}
we have
\begin{align}
  \label{eq:389}
  \ppp{\mu}{v}=\frac{1}{\cout-\cin}\frac{\cout}{\cin}\pp{\cin}{\beta}\frac{\partial^3\beta}{\partial v^3}+\lot,\qquad\ppp{\nu}{v}=\lot.
\end{align}
Therefore,
\begin{align}
  \label{eq:390}
  \ppp{M}{v}=\frac{1}{\cout-\cin}\frac{\cout}{\cin}\pp{\cin}{\beta}\frac{\partial^3\beta}{\partial v^3}\pp{x}{u}+\nu\frac{\partial^3x}{\partial v^3}+\lot,
\end{align}
which implies
\begin{align}
  \label{eq:391}
  \left|\ppp{M}{v}(u,v)\right|\leq C+\overline{C}_1|\beta_+'''(v)|+\overline{C}_2\left|\frac{\partial^3x}{\partial v^3}(u,v)\right|.
\end{align}
Putting this into \eqref{eq:387} and taking $\varepsilon$ sufficiently small, such that $|a\Gamma(v)|\leq 1$ (recall that $\Gamma(0)=-1$), we obtain
\begin{align}
  \label{eq:392}
  \left|\frac{\partial^3x}{\partial v^3}(u,v)\right|\leq C+a\left|\frac{\partial^3x}{\partial u^3}(av,v)\right|+\overline{C}_1u|\beta_+'''(v)|+\overline{C}_2\int_{av}^u\left|\frac{\partial^3x}{\partial v^3}(u',v)\right|du'.
\end{align}
Analogous to the way we arrived at this equation, we find, based on (see \eqref{eq:75})
\begin{align}
  \label{eq:393}
  \pp{x}{u}(u,v)=-\Gamma(u)\pp{x}{u}(au,u)-\int_{au}^uM(u',u)du'+\int_u^vM(u,v')dv'
\end{align}
the estimate
\begin{align}
  \label{eq:394}
  \left|\frac{\partial^3x}{\partial u^3}(u,v)\right|&\leq C+a\left|\frac{\partial^3x}{\partial u^3}(au,u)\right|+\overline{C}_1v|\beta_+'''(u)|+\overline{C}_2\int_{au}^u\left|\frac{\partial^3x}{\partial v^3}(u',u)\right|du'\nonumber\\
  &\hspace{60mm}+\overline{C}_3\int_u^v\left|\frac{\partial^3x}{\partial u^3}(u,v')\right|dv'.
\end{align}

Using a Gronwall type argument in the $u$-variable, we obtain from \eqref{eq:392}
\begin{align}
  \label{eq:395}
  \left|\frac{\partial^3x}{\partial v^3}(u,v)\right|\leq C+(a+\overline{C}_1u)\left|\frac{\partial^3x}{\partial u^3}(av,v)\right|+\overline{C}_2u|\beta_+'''(v)|,
\end{align}
while using a Gronwall type argument in the $v$-variable, we obtain from \eqref{eq:394}
\begin{align}
  \label{eq:396}
  \left|\frac{\partial^3x}{\partial u^3}(u,v)\right|\leq C+(a+\overline{C}_1v)\left|\frac{\partial^3x}{\partial u^3}(au,u)\right|+\overline{C}_2v|\beta_+'''(u)|+\overline{C}_3\int_{au}^u\left|\frac{\partial^3x}{\partial v^3}(u',u)\right|du'.
\end{align}
Using \eqref{eq:395} in \eqref{eq:396}, the latter becomes
\begin{align}
  \label{eq:397}
  \left|\frac{\partial^3x}{\partial u^3}(u,v)\right|\leq C+(a+\overline{C}_1v)\left|\frac{\partial^3x}{\partial u^3}(au,u)\right|+\overline{C}_2v|\beta_+'''(u)|.
\end{align}
Using now the estimate for $\beta_+'''(v)$ as given in \eqref{eq:379}, we obtain from \eqref{eq:395}, \eqref{eq:397}:
\begin{align}
  \label{eq:398}
  \sup_{T_\varepsilon}\left|\frac{\partial^3x}{\partial v^3}\right|&\leq C+(a+\overline{C}_1\varepsilon)\sup_{T_\varepsilon}\left|\frac{\partial^3x}{\partial u^3}\right|+\overline{C}_2\varepsilon\sup_{T_\varepsilon}\left|\frac{\partial^3x}{\partial v^3}\right|,\\
    \label{eq:399}
    \sup_{T_\varepsilon}\left|\frac{\partial^3x}{\partial u^3}\right|&\leq C+(a+\overline{C}_1\varepsilon)\sup_{T_\varepsilon}\left|\frac{\partial^3x}{\partial u^3}\right|+\overline{C}_2\varepsilon\sup_{T_\varepsilon}\left|\frac{\partial^3x}{\partial v^3}\right|.
\end{align}
Choosing $\varepsilon$ sufficiently small, we deduce from \eqref{eq:398}
\begin{align}
  \label{eq:400}
  \sup_{T_\varepsilon}\left|\frac{\partial^3x}{\partial v^3}\right|\leq C+\overline{C}\sup_{T_\varepsilon}\left|\frac{\partial^3x}{\partial u^3}\right|.
\end{align}
This in \eqref{eq:399} yields
\begin{align}
  \label{eq:401}
  \sup_{T_\varepsilon}\left|\frac{\partial^3x}{\partial u^3}\right|&\leq C+(a+\overline{C}\varepsilon)\sup_{T_\varepsilon}\left|\frac{\partial^3x}{\partial u^3}\right|.
\end{align}
Choosing $\varepsilon$ sufficiently small, we obtain
\begin{align}
  \label{eq:402}
  \sup_{T_\varepsilon}\left|\frac{\partial^3x}{\partial u^3}\right|&\leq C,
\end{align}
which, through \eqref{eq:400}, implies
\begin{align}
  \label{eq:403}
  \sup_{T_\varepsilon}\left|\frac{\partial^3x}{\partial v^3}\right|\leq C.
\end{align}
Using the bounds \eqref{eq:402}, \eqref{eq:403} in \eqref{eq:379} we obtain
\begin{align}
  \label{eq:404}
  \sup_{[0,\varepsilon]}|\beta_+'''|\leq C,
\end{align}
i.e.~we have shown that the third order derivatives of $\beta_+(v)$, $x(u,v)$ are bounded. As mentioned above, by $\beta(u,v)=\beta_+(v)$, $\alpha(u,v)=\beta_+(u)$ and the fact that the derivatives of $t(u,v)$ can, through the characteristic equations, be expressed in terms of the derivatives of $x(u,v)$, all derivatives of third order of $\alpha$, $\beta$, $t$, $x$ are bounded.

We note again that all the smallness conditions on $\varepsilon$ made in the argument above depend only on constants of the type $\overline{C}$, i.e.~do not depend on the order of derivative studied.
\end{proof}

\section*{Acknowledgements}
The author thanks Anne Franzen for many stimulating discussions on the subject and Demetrios Christodoulou for carefully reading the manuskript.

\bibliography{lisibach}
\bibliographystyle{abbrv}

\end{document}